\documentclass[10pt]{article}
\usepackage{amsmath,amssymb}
\usepackage{enumerate}

\allowdisplaybreaks[1]

\newtheorem{theorem}{Theorem}[section]
\newtheorem{lemma}[theorem]{Lemma}
\newtheorem{proposition}[theorem]{Proposition}

\newtheorem{definition}[theorem]{Definition}
\newtheorem{remark}[theorem]{Remark}
\newtheorem{example}[theorem]{Example}

\newtheorem{problem}[theorem]{Problem}

\newcommand{\Z}{\mathbb{Z}}

\newcommand{\im}{\operatorname{Im}}
\renewcommand{\ker}{\operatorname{Ker}}
\newcommand{\id}{\operatorname{id}}
\newcommand{\Id}{\operatorname{Id}}
\newcommand{\sym}{\operatorname{Sym}}
\newcommand{\aut}{\operatorname{Aut}}
\newcommand{\End}{\operatorname{End}}

\newcommand{\soc}{\operatorname{Soc}}
\newcommand{\GL}{\operatorname{GL}}
\newcommand{\Aut}{\operatorname{Aut}}

\newenvironment{proof}{\par\noindent{\bf Proof.}}{$\qed$\par\bigskip}
\newcommand{\qed}{\enspace\vrule  height6pt  width4pt  depth2pt}
\usepackage{color}

\begin{document}

\title{Iterated matched products of finite braces and simplicity; new solutions of the Yang-Baxter equation\thanks{The two first-named authors were partially supported by the grants
DGI MICIIN MTM2011-28992-C02-01, and MINECO MTM2014-53644-P. The
third author is supported in part by Onderzoeksraad of Vrije
Universiteit Brussel and Fonds voor Wetenschappelijk Onderzoek
(Belgium). The fourth author is supported by the National Science
Centre grant 2013/09/B/ST1/04408 (Poland).
\newline {\em 2010 MSC:} Primary 16T25, 20E22, 20F16.
\newline {\em Keywords:} Yang-Baxter equation, set-theoretic solution,
brace, simple, matched product.}}
\author{D. Bachiller \and F. Ced\'o \and E. Jespers \and J. Okni\'{n}ski}
\date{}

\maketitle

\begin{abstract}
Braces were introduced by Rump as a promising tool in the study of
the set-theoretic solutions of the Yang-Baxter equation. It has
been recently proved that, given a left brace $B$, one can
construct explicitly all the non-degenerate involutive
set-theoretic solutions of the Yang-Baxter equation such that the
associated permutation group is isomorphic, as a left brace, to
$B$. It is hence of fundamental importance to describe all
simple objects in the class of finite left braces. In this paper
we focus on the matched product decompositions of an arbitrary
finite left brace. This is used to construct new families of
finite simple left braces.
\end{abstract}

\section{Introduction}
Braces were introduced by Rump \cite{Rump} to study a class of
solutions of the Yang-Baxter equation, a fundamental equation in
mathematical physics that has become, since its origin in a paper of
Yang \cite{Yang}, a key ingredient in quantum groups and Hopf
algebras \cite{K}. The primary aim of this paper is to present new
general constructions of finite braces, with the main focus on
constructing finite simple braces. The latter is the key step in the
challenging problem of a classification of finite simple braces. Our
approach is based on the notion of iterated matched product of
braces, which turns out to be an indispensable tool in this context.

Recall that a solution of the quantum Yang-Baxter equation is a
linear map $\mathcal{R}\colon V\otimes V\longrightarrow V\otimes V$,
for a vector space $V$, such that
\begin{eqnarray}\label{qYBeq}
&&\mathcal{R}_{12}\mathcal{R}_{13}\mathcal{R}_{23}=\mathcal{R}_{23}\mathcal{R}_{13}\mathcal{R}_{12},
\end{eqnarray}
where $\mathcal{R}_{ij}$ denotes the map $V\otimes V \otimes V
\longrightarrow V\otimes V \otimes V$ acting as $\mathcal{R}$ on the
$(i,j)$ tensor factor and as the identity on the remaining factor. A central and difficult open problem is
to construct new families of solutions of this equation. An
equivalent problem is to find solutions of the Yang-Baxter equation
\begin{eqnarray}\label{YBeq}
&&R_{12}R_{23}R_{12}=R_{23}R_{12}R_{23}.
\end{eqnarray}
In fact, if $\tau\colon V\otimes V\longrightarrow V\otimes V$ is the
linear map such that $\tau (u\otimes v)=v\otimes u$ for all $u,v\in
V$, then it is easy to check that $\mathcal{R}\colon V\otimes V
\longrightarrow V\otimes V$ is a solution of the quantum Yang-Baxter
equation~(\ref{qYBeq}) if and only if $R=\tau\circ \mathcal{R}$ is a
solution of the Yang-Baxter equation~(\ref{YBeq}). In the context of
quantum groups and Hopf algebras  such solutions are often referred
to as R-matrices (see for example \cite{brown,K}).  Drinfeld
 in \cite{drinfeld} initiated the investigations  of the set-theoretic solutions of the
Yang-Baxter equation, these are the maps $r\colon X\times
X\longrightarrow X\times X$ such that
\begin{eqnarray}\label{stYBeq}
&&r_{12}r_{23}r_{12}=r_{23}r_{12}r_{23},
\end{eqnarray}
where $r_{ij}$ denotes the map $X\times X \times X \longrightarrow
X\times X \times X$ acting as $r$ on the $(i,j)$ components and as
the identity on the remaining component. Note that if $X$  is  a
basis of the vector space $V$, then every such solution $r$ induces
a solution $R\colon V\otimes V\longrightarrow V\otimes V$ of the
Yang-Baxter equation~(\ref{YBeq}).

Gateva-Ivanova and Van den Bergh \cite{GIVdB}, and Etingof, Schedler
and Soloviev \cite{ESS} introduced a subclass of the set-theoretic
of solutions, the non-degenerate involutive solutions. Recall that a
set-theoretic solution $r\colon X\times X\longrightarrow X\times X$
of the Yang-Baxter equation~(\ref{stYBeq}),  written in the form
$r(x,y)=(f_x(y),g_y(x))$ for  $x,y\in X$, is involutive if
$r^{2}=\id_{X^{2}}$, and it is non-degenerate if $f_x$ and $g_x$ are
bijective maps from $X$ to $X$, for all $x\in X$. This class of
solutions has received a lot of attention in  recent years, see for
example  \cite{CCS2015,CCS,CJO, CJO2, CJR, ESS, GI, GIC, gat-maj,
GIVdB, JO, JObook, Rump1, Rump,ven2016}. Braces were introduced to
study this  type  of solutions. Recall that a left brace is a set
$B$ with two operations, $+$ and $\cdot$,
 such that $(B,+)$ is an abelian group, $(B,\cdot)$ is a group and
\begin{eqnarray}\label{lbrace}
&& a\cdot (b+c)+a=a\cdot b+a\cdot c,
\end{eqnarray}
for all $a,b,c\in B$. A  right brace is defined similarly, but
replacing property (\ref{lbrace}) by $(b+c)\cdot a+a=b\cdot a+
c\cdot a$. If $B$ is both a left and a right brace (for the same
operations), then one says that $B$ is  a two-sided brace. Rump
initiated the study of this new algebraic structure, though, using
another but equivalent definition  \cite{Rump3, Rump, Rump2, Rump4,
Rump5, Rump6, Rump7}. In particular, he noted that the structure
group $G(X,r)$ of a non-degenerate, involutive set-theoretic
solution $(X,r)$ of the Yang-Baxter equation (solution of the YBE
for short) admits a natural structure of left brace, such that its
additive group is the free abelian group with basis $X$ and
$xy-x=f_x(y)$ for all $x,y\in X$. The structure group $G(X,r)$ was
introduced and studied in \cite{ESS, GIVdB} and it is defined as the
group with the following presentation
$$G(X,r)= \langle X\mid xy=f_x(y)g_y(x)\mbox{ for all }x,y\in X\rangle.$$

Another important group associated to a solution $(X,r)$ of the YBE
is its permutation group $\mathcal{G}(X,r)$, which is the subgroup
of the symmetric group $\sym_X$ on $X$ generated by $\{ f_x\mid x\in
X\}$.  The map $x\mapsto f_x$ extends to a group  epimorphism
$\phi\colon G(X,r)\longrightarrow \mathcal{G}(X,r)$  with kernel
$\ker(\phi)=\{ a\in G(X,r)\mid ab=a+b$ for all $b\in G(X,r) \}$. The
group $\mathcal{G}(X,r)$ inherits a natural left brace structure so
that $\phi$ becomes a homomorphism of left braces.

Some important open problems  have been solved in
\cite{CJO2} using braces.
 Several aspects of the theory of braces and their applications in the context of the
 Yang-Baxter equation have been recently considered  in \cite{GI-braces, Smokt} and
 \cite{CGIS}.
 It is known that every finite left brace is isomorphic to
$\mathcal{G}(X,r)$ (as left braces) for some finite solution $(X,r)$
of the YBE (\cite[Theorem 2]{CJO2}). Thus, by \cite[Theorem~2.15]{ESS}, the multiplicative
group of every finite left brace is solvable. But not all  finite
solvable groups are isomorphic to the multiplicative group of a left
brace \cite{B2}. In fact, there exist finite $p$-groups that are not
isomorphic to the multiplicative group of any left brace.  Given a
left brace $B$, in \cite{BCJ}  a method is  given to construct
explicitly  all the solutions $(X,r)$ of the YBE such that
$\mathcal{G}(X,r)\cong B$ as left braces. Therefore, the problem of
constructing all the solutions of the YBE is reduced to describing
all left braces. The challenging problem of classifying all finite
left braces naturally splits into two parts:
\begin{itemize}
\item[(a)] Classify the simple objects in the class of   finite left braces.
\item[(b)] Develop an appropriate   theory of extensions of left braces.
\end{itemize}
Note that, by  \cite[Corollary~II.6.12]{Cohn},  a version of
Jordan-H\"{o}lder theorem holds for finite left braces; emphasizing
the importance of simple  left braces.

Recall that an ideal of a left brace $B$ is a normal subgroup $I$ of
its multiplicative group such that $\lambda_b(a)\in I$ for all $a\in
I$ and $b\in B$, where $\lambda_b$ is the automorphism of $(B,+)$
defined by $\lambda_b(c)=bc-b$, for all $b,c\in B$. For example, the
socle $\soc (B) =\{ b\in B \mid \lambda_b =\id \}$ is an ideal of
$B$. A left brace $B$ is  said to be  simple if it is nonzero and
$\{0\}$ and $B$ are the only ideals of $B$. Recall that a left brace
is  said to be  trivial if its multiplication coincides with its
addition. The socle of an arbitrary  left brace $B$ is a trivial
brace. It is known that every simple left brace of prime power order
$p^n$ is a trivial brace of cardinality $p$,  \cite[Corollary on page 166]{Rump}. Until
recently, these were the only known examples of finite simple left
braces. The first finite nontrivial simple left braces have been
constructed in \cite[Theorem~6.3 and Section 7]{B3};
 the additive groups of which
are isomorphic  to $\mathbb{Z}/(p_1)\times
(\mathbb{Z}/(p_2))^{k(p_1-1)+1}$, where $p_1,p_2$ are primes such
that $p_2\mid p_1-1$  and $k$ is a positive integer. We shall give
a much larger class of examples based on the construction of
matched products of braces, which is introduced in \cite{B3} as a
natural extension of the matched product (or bicrossed product) of
groups \cite{K}.  Note that matched products of groups also
appear in the context of solutions of the YBE, see for example the
survey of Takeuchi \cite{Tak} and \cite{GI-braces}.

  Every left brace $B$ admits a left action
$\lambda\colon (B,\cdot)\longrightarrow \Aut(B,+)$ defined by
$\lambda(b)=\lambda_b$ for all $b\in B$
(see \cite[Lemma~1]{CJO2}). It is called the lambda map of the left  brace $B$.
Recall that, given the lambda map of a left brace $B$,  each of the structures $(B,\cdot)$ and $(B,+)$ determines the other one uniquely.

Lambda maps are used to define the matched products of left braces.

\begin{definition}\label{matchedpair}
Let $G$ and $H$ be two left braces. Let $\alpha:
(H,\cdot)\longrightarrow \aut(G,+)$ and $\beta: (G,\cdot)\longrightarrow \aut(H,+)$ be group
homomorphisms. One says that $(G,H,\alpha,\beta)$ is a matched pair of
left braces if the following conditions hold:
\begin{itemize}
\item[$(\mathrm{MP}1)$] $\lambda_a^{(1)}\circ\alpha_b=\alpha_{\beta_a(b)}\circ\lambda^{(1)}_{\alpha^{-1}_{\beta_a(b)}(a)}$,
\item[$(\mathrm{MP}2)$]
$\lambda_b^{(2)}\circ\beta_a=\beta_{\alpha_b(a)}\circ\lambda^{(2)}_{\beta^{-1}_{\alpha_b(a)}(b)}$,
\end{itemize}
where $\alpha(b)=\alpha_b$ and $\beta(a)=\beta_a$, for all $a\in G$
and $b\in H$, {\ with $\lambda^{(1)}$ and $\lambda^{(2)}$ denoting} the lambda
maps of the left braces $G$ and $H$, respectively.
\end{definition}

Let $(G,H,\alpha,\beta)$ be a matched pair of left braces. Then by
\cite[Theorem~4.2]{B3} $G\times H$ is a left brace with addition
$$
(a,b)+(a',b')  = (a+a',b+b'),
$$
and with lambda map given by
$$
\lambda_{(a,b)}(a',b')=\left(\alpha_b\lambda^{(1)}_{\alpha^{-1}_b(a)}(a'),~\beta_a\lambda^{(2)}_{\beta^{-1}_a(b)}(b')\right).
$$

\begin{definition}\label{matchedproduct}
Let $(G,H,\alpha,\beta)$ be a matched pair of left braces. The
left brace defined as above is called the matched product of $G$ and
$H$. We simply denote it by $G\bowtie H$.
\end{definition}

Note that, if $\beta $ is trivial, then we get a semidirect product $G\rtimes H$ of left braces, considered
in \cite{Rump5} and \cite[Section 6]{CJO2}, and then $G$ is an ideal of $G\bowtie H$,
and if additionally $\alpha $ is trivial then we get the direct product $G\times H$ of left braces.

Of course, if $(G,H,\alpha,\beta)$ is a matched pair of left braces
then so is $(H,G,\beta, \alpha)$. Furthermore, it easily is verified
that the map $G\bowtie H \longrightarrow H\bowtie G$ defined by $(a,b)
\mapsto (b,a)$ is an isomorphism of left braces.

Recall that a left ideal of a left brace $B$ is a subgroup $S$ of
its multiplicative group such that $\lambda_b(a)\in S$ for all $a\in
S$ and all $b\in B$. Note that for every $b,a \in S$ we have $b-a =
\lambda_{a}(a^{-1}b)$. Thus, in particular, $S$ is a left subbrace
of $B$. If $G\bowtie H$ is a matched product of left braces, then
$G\times \{0\}$ and $\{0\}\times H$ are left ideals of $G\bowtie H$.
Conversely, it is not difficult to see (use for example Lemma 2 in
\cite{CJO2} to verify conditions (MP1) and (MP2)) that if $B$ is a
left brace and $B_1,B_2$ are two left ideals of $B$ such that
$(B,+)$ is the inner direct sum of $(B_1,+)$ and $(B_2,+)$, then
$(B_1,B_2,\alpha,\beta)$ is a matched pair of left braces, where
$\alpha_b(a)=ba-b$ and $\beta_a(b)=ab-a$, for all $a\in B_1$ and
$b\in B_2$. Furthermore, the map $\eta: B_1\bowtie
B_2\longrightarrow B$ defined by $\eta(a,b)=a+b$, for all $a\in B_1$
and $b\in B_2$, is an isomorphism of left braces.

Our main starting point is the following observation, contained
implicitly in \cite[Section 4]{B3}.

\begin{remark}\label{Sylow_remark}
{\rm Let $B$ be a finite nonzero left brace. Then there exist
distinct prime numbers $p_1,\ldots, p_k$ and left braces
$H_1,\ldots, H_k$, with $k\geq 1$,  such that $|H_i|=p_{i}^{n_i}$
and $B$ is an iterated matched product $B=(\dots (H_1\bowtie
H_2)\bowtie \dots) \bowtie H_k$ of  left braces.  Moreover, each
$H_i$ is a left ideal of $B$.}
\end{remark}

Essentially, the key argument used in the proof is that the Sylow
subgroups of $(B,+)$ are left ideals of $B$. In
Section~\ref{iterated} we give a proof of a more general result,
Theorem~\ref{itlb}.

 Remark~\ref{Sylow_remark} explains why one can construct finite
left braces using matched products,  with braces of prime power
order as the building blocks. In particular, all finite simple
left braces can be constructed in this way.  It is the aim of this
paper to construct a large class of simple braces via this method.
So we focus on part (a) of the classification problem. Some
partial results on the classification of ``small'' left braces can
be found in \cite{B,Ven1,Rump2}. Concerning part (b), i.e.
developing a  theory of  extensions of left braces, some
preliminary results can be found in \cite{B3,BenDavid, BDG2,
Vend-Lebed}.  But a general theory is missing.

In Section~\ref{iterated}, we study iterated matched products of
left braces corresponding to an inner direct sum of left ideals.
First we characterize the iterated matched products of left braces that are of this form. Next,
we give necessary and sufficient conditions for such a matched product to be simple provided the
left ideals are simple left braces.

In Section~\ref{simple}, we first  generalize an intriguing
construction of Heged\H{u}s \cite{H}, developed in the context of
regular subgroups of the affine group, that has been recently
considered also in \cite{B3} and \cite{CCS}.  Next, within this class,  we  construct
iterated matched products  of left braces and we give
necessary and sufficient conditions on the actions corresponding to
these matched products for their simplicity.

In Section~\ref{examples},  we  construct concrete examples of
simple left braces  of the type described in Section~\ref{simple}.
We also show how to construct more examples of finite simple left
braces using the results of Section~\ref{iterated} and thus
indicating that this may be a very rich area to explore.  In
this context, as mentioned before,  it is shown in \cite[Theorem 3.1]{BCJ} how
to describe for a  given left brace $B$,  all solutions of the YBE
with associated permutation group isomorphic to $B$ (as a left
brace). So the new examples constructed in Section~\ref{examples}
provide  new families of solutions of the YBE.

Finally, in Section~\ref{comments}, we state two problems that are
fundamental  for the  program of the  classification of finite
(simple) left braces and thus a description of all finite solutions
of  the YBE. The first problem is concerned with the automorphism
group of a finite left brace of prime power order. The second
problem deals with simplicity of left braces of orders of the form
$p^{n}q^{m}$, for two primes $p,q$. In this context, Smoktunowicz in
\cite{Smokt} recently proved the following property.
\begin{center}{\it
If $|B|=q^{r} k$ with $q$ prime,  $(q, k)=1$, $ k\neq 1$ and $B$ is a
simple left brace  then there exists a prime $p$ such that $q|(p^{t}-1)$ for
some $1\leq t$ and $p^{t}| k$.}
\end{center}
So, if $B$ is a simple left brace of order $p^nq^m$ (with $p$ and
$q$ different prime numbers and $n,m$ positive integers) then $p |
(q^{t}-1)$ and $q| (p^{s}-1)$ for some $0<t\leq m$ and $0<s \leq n$.
We observe that these conditions are not sufficient for simplicity of $B$.

\section{Finite braces as  iterated matched products of left ideals and
simplicity}\label{iterated}

In the first part of this section, motivated by Remark
~\ref{Sylow_remark},  we characterize left braces that are iterated
matched products of subbraces that are left ideals. In the second
part, we prove a simplicity result that is later used to construct
several new families of simple left braces.

Let $B$ be a left brace. Suppose that there exist left ideals
$B_1,\dots ,B_n$ of $B$ such that $n\geq 2$ and the additive group
of $B$ is the direct sum of the additive groups of the left ideals
$B_i$. Denote by $\lambda^{(i)}$ the lambda map corresponding to
$B_i$, which is the restriction to $B_i$ of the lambda map $\lambda$
of $B$. For $1\leq j<n$, denote by $\lambda^{(1,\dots ,j)}$ the lambda map corresponding to $B_1+\dots +B_j$, which is the restriction to
$B_1+\dots +B_j$ of the lambda map of $B$. Since $B_1+\dots +B_j$ and $B_{j+1}$ are left ideals of $B$, the maps
$$\alpha^{((1,\dots ,j),j+1)}\colon (B_1+\dots +B_j,\cdot)\longrightarrow \Aut(B_{j+1},+),\quad\mbox{and}$$
$$\alpha^{(j+1,(1,\dots ,j))}\colon (B_{j+1},\cdot)\longrightarrow \Aut(B_1+\dots+B_j,+)$$
defined by $\alpha^{((1,\dots ,j),j+1)}(x)=\alpha^{((1,\dots
,j),j+1)}_x ,\;$ $\alpha^{(j+1,(1,\dots ,j))}(y)=\alpha^{(j+1,(1,\dots
,j))}_y$ and $\alpha^{((1,\dots ,j),j+1)}_x(y)=xy-x$ and
$\alpha^{(j+1,(1,\dots ,j))}_y(x)=yx-y$, for all $x\in B_1+\dots
+B_j$ and $y\in B_{j+1}$, are homomorphisms of groups. Since
$\lambda_a\circ \lambda_{\lambda_a^{-1}(b)}=\lambda_b\circ
\lambda_{\lambda_b^{-1}(a)}$ (see \cite[Lemma~2]{CJO2}), we have
that
\begin{itemize}
\item[(i)] $\lambda^{(j+1)}_y\circ\alpha^{((1,\dots,j),j+1)}_{(\alpha^{(j+1,(1,\dots,j))}_y)^{-1}(x)}=
\alpha^{((1,\dots,j),j+1)}_x\circ\lambda^{(j+1)}_{(\alpha^{((1,\dots,j),j+1)}_x)^{-1}(y)}$ and
\item[(ii)] $\lambda^{(1,\dots,j+1)}_x\circ\alpha^{(j+1,(1,\dots,j))}_{(\alpha^{((1,\dots,j),j+1)}_x)^{-1}(y)}=
\alpha^{(j+1,(1,\dots,j))}_y\circ\lambda^{(1,\dots,j+1)}_{(\alpha^{(j+1,(1,\dots,j))}_y)^{-1}(x)}$.
\end{itemize}
Therefore $(B_1+\dots+B_j, B_{j+1},\alpha^{(j+1,(1,\dots,j))},\alpha^{((1,\dots,j),j+1)})$ is a matched pair of left braces and, by \cite[Theorem~4.2]{B3}, the map
$$\eta_{1,\dots,j+1}\colon (B_1+\dots+B_j)\bowtie B_{j+1}\longrightarrow B_1+\dots+B_{j+1},$$
defined by $\eta_{1,\dots,j+1}(x,y)=x+y$, is an isomorphism of left braces.

We define the maps $\beta^{(2,(1))}=\alpha^{(2,(1))},\;$ $\beta^{((1),2)}=\alpha^{((1),2)}$ and, for $1< j<n$,
$$\beta^{((1,\dots, j),j+1)}\colon B_1\times \dots\times B_j\longrightarrow \Aut(B_{j+1},+)\quad\mbox{and}$$
$$\beta^{(j+1,(1,\dots, j))}\colon B_{j+1}\longrightarrow \Aut(B_1\times\dots\times B_j,+)$$
by $\beta^{((1,\dots,j),j+1)}(a_1,\dots,a_j)=\beta^{((1,\dots,j),j+1)}_{(a_1,\dots,a_j)},\;$ $\beta^{(j+1,(1,\dots,j))}(a_{j+1})
=\beta^{(j+1,(1,\dots,j))}_{a_{j+1}}$,
$$\beta^{((1,\dots,j),j+1)}_{(a_1,\dots,a_j)}(a_{j+1})=\alpha^{((1,\dots,j),j+1)}_{a_1+\dots+a_j}(a_{j+1})
\quad\mbox{and}$$
 $$\beta^{(j+1,(1,\dots,j))}_{a_{j+1}}(a_1,\dots,a_j)=\eta_j^{-1}\alpha^{(j+1,(1,\dots,j))}_{a_{j+1}}\eta_j(a_1,\dots,a_j),$$ where
$\eta_k\colon B_1\times \dots\times B_k\longrightarrow B_1+\dots+B_k$ is defined by
$\eta_k(a_1,\dots,a_k)=a_1+\dots+a_k$, for $1<k\leq n$.

\begin{proposition}\label{inner}
With the above notation,
$$((\dots(B_1\bowtie B_2)\bowtie\dots)\bowtie B_j,B_{j+1},\beta^{(j+1,(1,\dots,j))},\beta^{((1,\dots,j),j+1)})$$
is a matched pair of left braces and the map  $\eta_{j+1}\colon
(\dots(B_1\bowtie B_2)\bowtie\dots)\bowtie B_{j+1}\longrightarrow
B_1+\dots+B_{j+1}$ is an isomorphism of left braces, for all $1\leq
j<n$.
\end{proposition}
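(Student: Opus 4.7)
The plan is to prove the proposition by induction on $j\in\{1,\dots,n-1\}$. For the base case $j=1$, I observe that by definition $\beta^{(2,(1))}=\alpha^{(2,(1))}$ and $\beta^{((1),2)}=\alpha^{((1),2)}$, so both assertions coincide with what was established in the discussion preceding the proposition: $(B_1,B_2,\alpha^{(2,(1))},\alpha^{((1),2)})$ is a matched pair, and $\eta_2\colon B_1\bowtie B_2\longrightarrow B_1+B_2$, $(a_1,a_2)\mapsto a_1+a_2$, is a brace isomorphism by \cite[Theorem~4.2]{B3}.

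For the inductive step, I fix $j\geq 2$ and assume the proposition already holds for $j-1$, so that setting $C_j=(\dots(B_1\bowtie B_2)\bowtie\dots)\bowtie B_j$, the map $\eta_j\colon C_j\longrightarrow B_1+\dots+B_j$ is a left brace isomorphism. The core observation I would exploit is that the two $\beta$-maps from the statement are, by their very definition, the transports of the (already verified) $\alpha$-matched pair $(B_1+\dots+B_j,B_{j+1},\alpha^{(j+1,(1,\dots,j))},\alpha^{((1,\dots,j),j+1)})$ along $(\eta_j,\id_{B_{j+1}})$:
$$\beta^{((1,\dots,j),j+1)}_{(a_1,\dots,a_j)}=\alpha^{((1,\dots,j),j+1)}_{\eta_j(a_1,\dots,a_j)},\qquad \beta^{(j+1,(1,\dots,j))}_{a_{j+1}}=\eta_j^{-1}\circ\alpha^{(j+1,(1,\dots,j))}_{a_{j+1}}\circ\eta_j.$$
Because $\eta_j$ is a brace isomorphism, it intertwines the lambda maps, $\eta_j\circ\lambda^{C_j}_c=\lambda^{(1,\dots,j)}_{\eta_j(c)}\circ\eta_j$ for all $c\in C_j$. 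Plugging this intertwining relation into the identities (i) and (ii) stated before the proposition (which encode (MP1) and (MP2) for the $\alpha$-pair) and pre- and post-composing with $\eta_j^{\pm 1}$, I would read off the required matched pair axioms for $(C_j,B_{j+1},\beta^{(j+1,(1,\dots,j))},\beta^{((1,\dots,j),j+1)})$.

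For the isomorphism assertion, I would factor $\eta_{j+1}$ as the composition
$$C_j\bowtie B_{j+1}\xrightarrow{(\eta_j,\,\id)}(B_1+\dots+B_j)\bowtie B_{j+1}\xrightarrow{\eta_{1,\dots,j+1}}B_1+\dots+B_{j+1},$$
where the first arrow is a brace isomorphism by the transport of structure just proved, and the second is the isomorphism recalled in the preamble. Evaluating on $((a_1,\dots,a_j),a_{j+1})$ yields $a_1+\dots+a_{j+1}$, which matches the definition of $\eta_{j+1}$, completing the induction. The only genuine technical point is the transport-of-structure calculation turning each occurrence of $\alpha$ and $\lambda^{(1,\dots,j)}$ into $\beta$ and $\lambda^{C_j}$; everything else is notational bookkeeping, so I do not expect a conceptual obstacle beyond keeping track of indices.
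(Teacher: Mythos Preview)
Your proposal is correct and follows essentially the same approach as the paper: induction on $j$, recognition that the $\beta$-maps are the transports of the $\alpha$-maps along $\eta_j$, use of the intertwining $\eta_j\circ\lambda^{C_j}_c=\lambda^{(1,\dots,j)}_{\eta_j(c)}\circ\eta_j$ to derive (MP1)--(MP2), and the factorization $\eta_{j+1}=\eta_{1,\dots,j+1}\circ(\eta_j\times\id)$. The paper merely writes out the two matched-pair identities and the lambda-map compatibility of $\eta_j\times\id$ explicitly rather than invoking ``transport of structure'', but the content is the same.
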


\begin{proof} We will prove the result by induction on $j$. For $j=1$, the result follows
because $\beta^{(2,(1))}=\alpha^{(2,(1))}$ and $\beta^{((1),2)}=\alpha^{((1),2)}$.
Suppose that $j>1$ and the result is true for $j-1$. By the induction hypothesis, $\eta_{j}$
is an isomorphism of left braces. Thus, since $\alpha^{((1,\dots,j),j+1)}$ and $\alpha^{(j+1,(1,\dots,j))}$ are homomorphisms of groups, we have that
$\beta^{((1,\dots,j),j+1)}$ and $\beta^{(j+1,(1,\dots,j))}$ are homomorphisms of groups.
Let $\tilde\lambda^{(1,\dots,j)}$ be the lambda map of the left brace $(\dots(B_1\bowtie B_2)\bowtie\dots)\bowtie B_j$ corresponding to the matched pair of left braces
$((\dots(B_1\bowtie B_2)\bowtie\dots)\bowtie B_{j-1},B_{j},\beta^{(j,(1,\dots,j-1))},\beta^{((1,\dots,j-1),j)})$.
Since $\eta_j$ is an isomorphism of left braces,
$$\tilde\lambda^{(1,\dots, j)}_{(a_1,\dots ,a_j)}=\eta_j^{-1}\lambda^{(1,\dots, j)}_{a_1+\dots +a_j}\eta_j.$$
 To show that $((\dots(B_1\bowtie
B_2)\bowtie\dots)\bowtie
B_j,B_{j+1},\beta^{(j+1,(1,\dots,j))},\beta^{((1,\dots,j),j+1)})$ is
a matched pair of left braces,  we should check the following
equalities.
\begin{itemize}
\item[(i)] $\tilde\lambda^{(1,\dots, j)}_{(a_1,\dots ,a_j)}\circ \beta^{(j+1,(1,\dots,j))}_{(\beta^{((1,\dots,j),j+1)}_{(a_1,\dots,a_j)})^{-1}(a_{j+1})}
=\beta^{(j+1,(1,\dots,j))}_{a_{j+1}}\circ\tilde\lambda^{(1,\dots, j)}_{(\beta^{(j+1,(1,\dots,j))}_{a_{j+1}})^{-1}(a_1,\dots,a_j)}$,
\item[(ii)] $\lambda^{(j+1)}_{a_{j+1}}\circ \beta^{((1,\dots,j),j+1)}_{(\beta^{(j+1,(1,\dots,j))}_{a_{j+1}})^{-1}(a_1,\dots,a_j)}
=\beta^{((1,\dots,j),j+1)}_{(a_1,\dots,a_j)}\circ\lambda^{(j+1)}_{(\beta^{((1,\dots,j),j+1)}_{(a_1,\dots,a_j)})^{-1}(a_{j+1})}$.
\end{itemize}
 We prove first equality (i).
\begin{eqnarray*}
\lefteqn{\tilde\lambda^{(1,\dots, j)}_{(a_1,\dots ,a_j)}\circ \beta^{(j+1,(1,\dots,j))}_{(\beta^{((1,\dots,j),j+1)}_{(a_1,\dots,a_j)})^{-1}(a_{j+1})}}\\
&=&\eta_j^{-1}\circ\lambda^{(1,\dots, j)}_{a_1+\dots +a_j}\circ \alpha^{(j+1,(1,\dots,j))}_{(\alpha^{((1,\dots,j),j+1)}_{a_1+\dots+a_j})^{-1}(a_{j+1})}\circ\eta_j\\
&=&\eta_j^{-1}\circ\alpha^{(j+1,(1,\dots,j))}_{a_{j+1}}\circ\lambda^{(1,\dots, j)}_{(\alpha^{(j+1,(1,\dots,j))}_{a_{j+1}})^{-1}(a_1+\dots+a_j)}\circ\eta_j\\
&=&\beta^{(j+1,(1,\dots,j))}_{a_{j+1}}\circ\tilde\lambda^{(1,\dots, j)}_{(\beta^{(j+1,(1,\dots,j))}_{a_{j+1}})^{-1}(a_1,\dots,a_j)}.
\end{eqnarray*}
Now we prove equality (ii).
\begin{eqnarray*}
\lefteqn{\lambda^{(j+1)}_{a_{j+1}}\circ \beta^{((1,\dots,j),j+1)}_{(\beta^{(j+1,(1,\dots,j))}_{a_{j+1}})^{-1}(a_1,\dots,a_j)}}\\
&=&\lambda^{(j+1)}_{a_{j+1}}\circ \alpha^{((1,\dots,j),j+1)}_{(\alpha^{(j+1,(1,\dots,j))}_{a_{j+1}})^{-1}(a_1+\dots+a_j)}\\
&=&\alpha^{((1,\dots,j),j+1)}_{a_1+\dots+a_j}\circ\lambda^{(j+1)}_{(\alpha^{((1,\dots,j),j+1)}_{a_1+\dots+a_j})^{-1}(a_{j+1})}\\
&=&\beta^{((1,\dots,j),j+1)}_{(a_1,\dots,a_j)}\circ\lambda^{(j+1)}_{(\beta^{((1,\dots,j),j+1)}_{(a_1,\dots,a_j)})^{-1}(a_{j+1})}.
\end{eqnarray*}
 Hence
$((\dots(B_1\bowtie B_2)\bowtie\dots)\bowtie
B_j,B_{j+1},\beta^{(j+1,(1,\dots,j))},\beta^{((1,\dots,j),j+1)})$ is
a matched pair of left braces. Now
$\eta_{j+1}=\eta_{1,\dots,j+1}\circ(\eta_j\times \id)$. Since
clearly $\eta_{j+1}$ is an isomorphism of the additive groups and
$\eta_{1,\dots,j+1}$ is an isomorphism of left braces, it is enough
to prove that
$$(\eta_j\times \id)(\tilde\lambda^{(1,\dots,j+1)}_{(a_1,\dots,a_{j+1})}(b_1,\dots,b_{j+1}))=
\lambda^{((1,\dots,j),j+1)}_{(a_1+\dots+a_j,a_{j+1})}(b_1+\dots+b_j,b_{j+1}),$$
 where $\lambda^{((1,\dots ,j),j+1)}$ is the lambda map of the
left brace $(B_1+\dots +B_j)\bowtie B_{j+1}$. We have that
\begin{eqnarray*}
\lefteqn{(\eta_j\times \id)(\tilde\lambda^{(1,\dots,j+1)}_{(a_1,\dots,a_{j+1})}(b_1,\dots,b_{j+1}))}\\
&=&(\eta_j\times \id)(\beta^{(j+1,(1,\dots,j))}_{a_{j+1}}\tilde\lambda^{(1,\dots,j)}_{(\beta^{(j+1,(1,\dots,j))}_{a_{j+1}})^{-1}(a_1,\dots,a_{j})}(b_1,\dots,b_{j}),\\
&&\qquad\beta^{((1,\dots,j),j+1)}_{(a_1,\dots,a_j)}\lambda^{(j+1)}_{(\beta^{((1,\dots,j),j+1)}_{(a_1,\dots,a_j)})^{-1}(a_{j+1})}(b_{j+1}))\\
&=&(\eta_j\times \id)(\eta_j^{-1}\alpha^{(j+1,(1,\dots,j))}_{a_{j+1}}\lambda^{(1,\dots,j)}_{(\alpha^{(j+1,(1,\dots,j))}_{a_{j+1}})^{-1}(a_1+\dots+a_{j})}\eta_j(b_1,\dots,b_{j}),\\
&&\qquad\alpha^{((1,\dots,j),j+1)}_{(a_1+\dots+a_j)}\lambda^{(j+1)}_{(\alpha^{((1,\dots,j),j+1)}_{(a_1+\dots+a_j)})^{-1}(a_{j+1})}(b_{j+1}))\\
&=&(\alpha^{(j+1,(1,\dots,j))}_{a_{j+1}}\lambda^{(1,\dots,j)}_{(\alpha^{(j+1,(1,\dots,j))}_{a_{j+1}})^{-1}(a_1+\dots+a_{j})}(b_1+\dots+b_{j}),\\
&&\qquad\alpha^{((1,\dots,j),j+1)}_{(a_1+\dots+a_j)}\lambda^{(j+1)}_{(\alpha^{((1,\dots,j),j+1)}_{(a_1+\dots+a_j)})^{-1}(a_{j+1})}(b_{j+1}))\\
&=&\lambda^{((1,\dots,j),j+1)}_{(a_1+\dots+a_j,a_{j+1})}(b_1+\dots+b_j,b_{j+1}).\end{eqnarray*}
Therefore the result follows.
\end{proof}

Note that with the above notation
$$\{ 0\}\times\dots\times\{ 0\}\times B_i\times\{ 0\}\times\dots\times\{ 0\}$$
is a left ideal of $(\dots(B_1\bowtie B_2)\bowtie\dots)\bowtie B_n$.
This motivates the following definition.

\begin{definition}
Let $B_1,\dots,B_n$ be left braces. We say that an iterated matched
product of left braces $(\dots(B_1\bowtie B_2)\bowtie\dots)\bowtie
B_n$ is an iterated matched product of left ideals if each $\{
0\}\times\dots\times\{ 0\}\times B_i\times\{ 0\}\times\dots\times\{
0\}$  is a left ideal of it.
\end{definition}

Note that if $B=(\dots(B_1\bowtie B_2)\bowtie\dots)\bowtie B_n$ is
an iterated matched product of left ideals and $\sigma\in\sym_n$, by
Proposition~\ref{inner}, we know that  $B$ is isomorphic to
$(\dots(C_{\sigma(1)}\bowtie C_{\sigma(2)})\bowtie\dots)\bowtie
C_{\sigma(n)}$, where $C_i=\{ 0\}\times\dots\times\{ 0\}\times
B_i\times\{ 0\}\times \dots\times\{ 0\}$  is a left ideal of $B$.
Hence, in an iterated matched product $(\dots(B_1\bowtie
B_2)\bowtie\dots)\bowtie B_n$ of left ideals the order of the
factors is irrelevant and this allows us to write it simply as $B_1\bowtie
B_2\bowtie\dots\bowtie B_n$.

As mentioned before, if $B_1 \bowtie B_2$ is matched product of two left
braces $B_1$ and $B_2$, then it easily is verified that both factors are
left ideals. However, the following example shows that a factor of
an arbitrary iterated matched product is not necessarily a left
ideal of the left brace.  It shows, in particular, that not every
iterated matched product of left braces is a matched product with the defining factors as  left
ideals.

\begin{example}{\rm
Let $A=\mathbb{Z}/(p)$, where $p$ is a prime. Then $A$ is a trivial
brace. Consider the trivial brace $A\times A$ and the maps
$\alpha\colon (A,+)\longrightarrow \Aut (A\times A,+)$, defined by
$\alpha(a)=\alpha_a$ and $\alpha_a(b,c)=(b+ac,c)$ (here $ac$ is the
multiplication in the field $\mathbb{Z}/(p)$). Clearly $\alpha$ is a
homomorphism of groups. The semidirect product  $(A\times A)\rtimes
A$ with respect to $\alpha$ is a left brace with the sum defined
componentwise. This is a particular case of matched product of left
braces. The direct product $A\times A$ is also a particular case of
matched product of left braces. But $\{0\}\times A\times \{0\}$ is
not a left ideal of $(A\times A)\rtimes A$, in fact
$\lambda_{(0,0,1)}(0,1,0)=(\alpha_1(0,1),0)=(1,1,0)$.}
\end{example}

We often will  use the following useful formula, valid in any left brace $B$.
\begin{eqnarray} \label{add_multip}
 b_{1}+\cdots +b_{s}&=&(b_1+\dots +b_{s-1})\lambda_{b_1+\dots +b_{s-1}}^{-1}(b_{s})\nonumber\\
 &=&(b_{1}+\dots+b_{s-2})\lambda_{b_{1}+\dots+b_{s-2}}^{-1}(b_{s-1})\lambda_{b_1+\dots +b_{s-1}}^{-1}(b_{s})
 \nonumber \\
 &\vdots &\nonumber\\
&=&b_{1}\lambda_{b_{1}}^{-1}(b_{2})\lambda_{b_{1}+b_{2}}^{-1}(b_{3})\cdots
\lambda_{b_{1}+\cdots +b_{s-1}}^{-1}(b_{s}),
\end{eqnarray}
 for any $s\geq 1$ and $b_i\in B$, $i=1,\ldots, s$.

\begin{theorem}\label{itlb}
Let $B_1,\dots,B_n$ be left braces with $n\geq 2$. An iterated
matched product $B=(\dots(B_1\bowtie B_2)\bowtie\dots)\bowtie B_n$
of left braces is an iterated matched product of left ideals if and
only if there exist homomorphisms of groups $$\alpha^{(j,i)}\colon
(B_j,\cdot)\longrightarrow \Aut(B_i,+)$$ satisfying the following
conditions:
\begin{itemize}
\item[{\rm (IM1)}]
$\lambda^{(i)}_a\circ\alpha^{(j,i)}_{(\alpha^{(i,j)}_{a})^{-1}(b)}=\alpha^{(j,i)}_b\circ\lambda^{(i)}_{(\alpha^{(j,i)}_b)^{-1}(a)}$
and
\item[{\rm (IM2)}]
$\alpha^{(k,i)}_c\circ\alpha^{(j,i)}_{(\alpha^{(k,j)}_{c})^{-1}(b)}=\alpha^{(j,i)}_b\circ\alpha^{(k,i)}_{(\alpha^{(j,k)}_b)^{-1}(c)}$,
\end{itemize}
 for all $a\in
B_i$, $b\in B_j$, $c\in B_k$, $i,j,k\in \{ 1,2,\dots ,n\}$, $j\neq
i$, $k\neq i$ and $k\neq j$, where $\lambda^{(i)}$ is the lambda map
of the left brace $B_i$ and $\alpha^{(i,j)}(a)=\alpha^{(i,j)}_{(a)}$, and furthermore
$(\dots(B_1\bowtie B_2)\bowtie\dots)\bowtie B_{j+1}$ is the matched product corresponding to the matched pair of left braces
$$((\dots(B_1\bowtie B_2)\bowtie\dots)\bowtie B_j,B_{j+1},\alpha^{(j+1,(1,\dots,j))},\alpha^{((1,\dots,j),j+1)}),$$
where
\begin{eqnarray}\label{cond1}&&\alpha^{((1,\dots,j),j+1)}_{(a_1,\dots,a_{j})}=
\alpha^{(1,j+1)}_{a_{1}}\alpha^{(2,j+1)}_{(\alpha^{(1,2)}_{a_1})^{-1}(a_{2})}\cdots\alpha^{(j,j+1)}_{(\alpha^{((1,\dots,j-1),j)}_{(a_1,\dots,a_{j-1})})^{-1}(a_{j})},\end{eqnarray}
\begin{eqnarray}\label{cond2}&&\alpha^{(j+1,(1,\dots,j))}_{a_{j+1}}(a_1,\dots,a_{j})=(\alpha^{(j+1,1)}_{a_{j+1}}(a_1),\dots,\alpha^{(j+1,j)}_{a_{j+1}}(a_{j})).\end{eqnarray}
\end{theorem}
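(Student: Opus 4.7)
The plan is to treat the two implications separately. The forward direction is essentially an unpacking of the global brace structure on $B$ provided by Proposition~\ref{inner}, while the reverse direction requires an induction on the number of factors with conditions (IM1), (IM2) used as commutation rules.

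For the forward direction, assume $B$ is an iterated matched product of left ideals and identify each $B_i$ with the corresponding left ideal slot inside $B$. Set $\alpha^{(j,i)}_b(a):=\lambda_b(a)$ for $a\in B_i$ and $b\in B_j$; the value lies in $B_i$ because $B_i$ is a left ideal, and $\alpha^{(j,i)}$ is a group homomorphism as a restriction of the global lambda map. Both (IM1) and (IM2) then fall out of the identity $\lambda_a\circ\lambda_{\lambda_a^{-1}(b)}=\lambda_b\circ\lambda_{\lambda_b^{-1}(a)}$ from \cite[Lemma~2]{CJO2}: for (IM1) take $a\in B_i$, $b\in B_j$ and restrict to $B_i$, using left-ideal stability to rewrite $\lambda_a^{-1}(b)$ as $(\alpha^{(i,j)}_a)^{-1}(b)\in B_j$ and $\lambda_a|_{B_i}$ as $\lambda^{(i)}_a$; for (IM2) take $b\in B_j$, $c\in B_k$ and restrict to $B_i$. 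Formula (\ref{cond2}) is immediate from $\lambda_{a_{j+1}}$ being an additive automorphism together with the componentwise identification of $B_1+\cdots+B_j$ with $B_1\times\cdots\times B_j$. Formula (\ref{cond1}) follows by applying (\ref{add_multip}) to rewrite $a_1+\cdots+a_j$ as an explicit multiplicative product, using the multiplicativity of $\lambda$, and restricting the resulting composition to $B_{j+1}$.

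For the reverse direction, I proceed by induction on $j$, showing at each step that $B^{(j)}:=(\dots(B_1\bowtie B_2)\bowtie\dots)\bowtie B_j$ is a well-defined matched product whose slots are left ideals. The base $j=2$ is direct: (IM1) with $(i,j)=(1,2)$ and with $(i,j)=(2,1)$ coincides (after the substitution $b\leftrightarrow \alpha^{(i,j)}_a(b)$) with (MP1) and (MP2) for $(B_1,B_2,\alpha^{(2,1)},\alpha^{(1,2)})$, while (IM2) is vacuous. For the inductive step, (\ref{cond2}) is clearly a homomorphism into $\Aut(B^{(j)},+)$, (\ref{cond1}) must be verified to be a homomorphism on $(B^{(j)},\cdot)$, and then (MP1), (MP2) must be checked for the pair $(B^{(j)},B_{j+1},\alpha^{(j+1,(1,\dots,j))},\alpha^{((1,\dots,j),j+1)})$. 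Condition (MP2) is obtained by expanding $\alpha^{((1,\dots,j),j+1)}$ via (\ref{cond1}) and pushing $\lambda^{(j+1)}$ past each $\alpha^{(k,j+1)}$ by a single application of (IM1). Condition (MP1) is checked componentwise on each $B_i\subseteq B^{(j)}$: by (\ref{cond2}), $\alpha^{(j+1,(1,\dots,j))}_b$ acts on $B_i$ as $\alpha^{(j+1,i)}_b$, and the required identity again reduces to (IM1) and (IM2). Preservation of each slot $B_i$ as a left ideal of $B^{(j+1)}$ then follows since $\tilde\lambda^{(1,\dots,j)}$ preserves $B_i$ by the inductive hypothesis and $\alpha^{(j+1,(1,\dots,j))}_b$ preserves $B_i$ by (\ref{cond2}).

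The main obstacle is the inductive verification that $\alpha^{((1,\dots,j),j+1)}$ is multiplicative and that (MP1) holds. Both require taking a product $(a_1,\dots,a_j)(c_1,\dots,c_j)$ in $B^{(j)}$, writing it additively by the inductive description of $\tilde\lambda^{(1,\dots,j)}$, re-expanding that sum via (\ref{add_multip}) as a long multiplicative product of elements drawn from the various $B_i$, and then collapsing the resulting composition of $\alpha^{(k,i)}$'s back to the required form using (IM1) and (IM2) as commutation rules. These rearrangements are mechanical but lengthy; once completed, the identification of the constructed brace with an iterated matched product of left ideals follows from the definitions and from Proposition~\ref{inner}.
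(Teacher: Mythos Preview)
Your forward direction (the ``only if'') matches the paper's argument essentially verbatim: define $\alpha^{(j,i)}$ by restricting the global $\lambda$, deduce (IM1) and (IM2) from \cite[Lemma~2]{CJO2}, get (\ref{cond2}) from additivity, and get (\ref{cond1}) from the factorization (\ref{add_multip}).

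Your reverse direction is correct but does considerably more work than is needed. The statement begins by \emph{assuming} that $B=(\dots(B_1\bowtie B_2)\bowtie\dots)\bowtie B_n$ already exists as an iterated matched product, and the hypothesis of the ``if'' direction is that the successive matched pairs defining $B$ are exactly those given by (\ref{cond1}) and (\ref{cond2}). Hence (MP1), (MP2), and the multiplicativity of $\alpha^{((1,\dots,j),j+1)}$ are part of the hypothesis, not something to be verified; the only thing left to prove is that each slot $B_i$ is a left ideal. The paper does precisely this by a short induction on $n$, using only (\ref{cond2}) and the shape of the lambda map of a matched product: for $i<n$ one has $\lambda^{(1,\dots,n)}_{(a_1,\dots,a_n)}(0,\dots,b_i,\dots,0)=(\alpha^{(n,(1,\dots,n-1))}_{a_n}\lambda^{(1,\dots,n-1)}_{\cdots}(0,\dots,b_i,\dots,0),0)$, and by induction and (\ref{cond2}) this lands back in slot $i$. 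You state exactly this argument in your final sentence of the inductive step, so your proof contains the paper's proof as a proper subset; the preceding paragraphs on checking (MP1), (MP2) and multiplicativity via (IM1), (IM2) can simply be dropped. (Those verifications are closer in spirit to Proposition~\ref{morphism}, which establishes that certain stronger hypotheses suffice to \emph{build} such a $B$ from scratch.)
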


\begin{proof}
Denote the lambda map of $B_i$ by $\lambda^{(i)}$ and the lambda map
of $(\dots(B_1\bowtie B_2)\bowtie\dots)\bowtie B_{j+1}$ by
$\lambda^{(1,\dots ,j+1)}$, for all $1\leq j<n$. Suppose first that
there exist homomorphisms of groups $\alpha^{(j,i)}\colon
(B_j,\cdot)\longrightarrow \Aut(B_i,+)$ satisfying all the
conditions in the statement. We shall prove that $\{
0\}\times\dots\times\{ 0\}\times B_k\times\{ 0\}\times\dots\times\{
0\}$ is a left ideal of $B$ by induction on $n$. For $n=2$, the
result follows by \cite[Theorem~4.2]{B3}. Suppose that $n>2$ and
that $\{ 0\}\times\dots\times\{ 0\}\times B_i\times\{
0\}\times\dots\times\{ 0\}$  is a left ideal of $((\dots(B_1\bowtie
B_2)\bowtie\dots)\bowtie B_{n-1}$ for all $i=1,\dots,n-1$. Let
$(a_1,\dots ,a_n),(b_1,\dots,b_n)\in B_1\times\dots\times B_n$. We
have that
$$\lambda^{(1,\dots,n)}_{(a_1,\dots,a_n)}(0,\dots,0,b_n)=(0,\dots,0,\alpha^{((1,\dots,n-1),n)}_{(a_1,\dots,a_{n-1})}
\lambda^{(n)}_{(\alpha^{((1,\dots,n-1),n)}_{(a_1,\dots,a_{n-1})})^{-1}(a_n)}(b_n))$$
and for $i<n$
\begin{eqnarray*}
\lefteqn{\lambda^{(1,\dots,n)}_{(a_1,\dots,a_n)}(0,\dots,0,b_i,0,\dots,0)}\\
&&=(\alpha^{(n,(1,\dots,n-1))}_{a_{n}}
\lambda^{(1,\dots ,n-1)}_{(\alpha^{(n,(1,\dots,n-1))}_{a_{n}})^{-1}(a_1,\dots,a_{n-1})}(0,\dots,0,b_i,0,\dots,0),0).\end{eqnarray*}
By induction hypothesis, there exists $c_i\in B_i$, such that
$$\lambda^{(1,\dots ,n-1)}_{(\alpha^{(n,(1,\dots,n-1))}_{a_{n}})^{-1}(a_1,\dots,a_{n-1})}(0,\dots,0,b_i,0,\dots,0)=(0,\dots,0,c_i,0,\dots,0).$$
Hence
\begin{eqnarray*}
\lefteqn{\lambda^{(1,\dots,n)}_{(a_1,\dots,a_n)}(0,\dots,0,b_i,0,\dots,0)}\\
&=&(\alpha^{(n,(1,\dots,n-1))}_{a_{n}}(0,\dots,0,c_i,0,\dots,0),0)\\
&=&((0,\dots,0,\alpha^{(n,i)}_{a_n}(c_i),0,\dots,0),0) \;\; (\mbox{by } (\ref{cond2})).
\end{eqnarray*}
Thus  $\{ 0\}\times\dots\times\{ 0\}\times B_k\times\{
0\}\times\dots\times\{ 0\}$  is a left ideal of $B$ for every
$k=1,\dots,n$. Therefore $B=(\dots(B_1\bowtie
B_2)\bowtie\dots)\bowtie B_n$  is an iterated matched product of
left ideals.

Suppose now that $B=(\dots(B_1\bowtie B_2)\bowtie\dots)\bowtie B_n$ is an iterated matched product of left ideals.
Let $\pi_k$ be the natural projection $\pi_k\colon B_1\times\dots\times B_n\longrightarrow B_k$.
We define
$$\alpha^{(j,i)}_{a_j}(a_i)=\pi_i\lambda^{(1,\dots,n)}_{(0,\dots ,0,a_j,0,\dots,0)}(0,\dots ,0,a_i,0,\dots,0),$$
for all $a_i\in B_i$, $a_j\in B_j$, $i,j\in\{ 1,2,\dots,n\}$ and
$i\neq j$. Note that, since $\{ 0\}\times\dots\times\{ 0\}\times
B_i\times\{ 0\}\times\dots\times\{ 0\}$  is a left ideal of $B$, we
have that
$$\lambda^{(1,\dots,n)}_{(0,\dots ,0,a_j,0,\dots,0)}(0,\dots ,0,a_i,0,\dots,0)=(0,\dots ,0,\alpha^{(j,i)}_{a_j}(a_i),0,\dots,0).$$
Now it is clear that $\alpha^{(j,i)}_{a_j}\in\Aut(B_i,+)$ and
$$(\alpha^{(j,i)}_{a_j})^{-1}(a_i)=\pi_i(\lambda^{(1,\dots,n)}_{(0,\dots ,0,a_j,0,\dots,0)})^{-1}(0,\dots ,0,a_i,0,\dots,0).$$
We shall prove by induction on $n$ that
$$\lambda^{(1,\dots,n)}_{(0,\dots,0,a_i,0,\dots,0)}(0,\dots,0,b_i,0,\dots,0)=(0,\dots,0,\lambda^{(i)}_{a_i}(b_i),0,\dots,0),$$
for all $a_i,b_i\in B_i$ and $i=1,\dots,n$. For $n=2$, this follows easily by the definition of $\lambda^{(1,2)}$. Suppose that $n>2$ and that
$$\lambda^{(1,\dots,n-1)}_{(0,\dots,0,a_i,0,\dots,0)}(0,\dots,0,b_i,0,\dots,0)=(0,\dots,0,\lambda^{(i)}_{a_i}(b_i),0,\dots,0),$$
for all $a_i,b_i\in B_i$ and $i=1,\dots,n-1$. Suppose that $i<n$. In this case
\begin{eqnarray*}\lefteqn{\lambda^{(1,\dots,n)}_{(0,\dots,0,a_i,0,\dots,0)}(0,\dots,0,b_i,0,\dots,0)}\\
&=&(\lambda^{(1,\dots ,n-1)}_{(0,\dots,0,a_i,0,\dots,0)}(0,\dots,0,b_i,0,\dots,0),0)\\
&=&((0,\dots,0,\lambda^{(i)}_{a_i}(b_i),0,\dots,0),0).
\end{eqnarray*}
For $i=n$ we have
$$\lambda^{(1,\dots,n)}_{(0,\dots,0,a_n)}(0,\dots,0,b_n)=(0,\dots ,0,\lambda^{(n)}_{a_n}(b_n)).$$
Note that
\begin{eqnarray*}\lefteqn{(0,\dots,0,a_i,0,\dots,0)(0,\dots,0,b_i,0,\dots,0)}\\
&=&(0,\dots,0,a_i,0,\dots,0)+\lambda^{(1,\dots,n)}_{(0,\dots,0,a_i,0,\dots,0)}(0,\dots,0,b_i,0,\dots,0)\\
&=&(0,\dots,0,a_i+\lambda^{(i)}_{a_i}(b_i),0,\dots,0)\\
&=&(0,\dots,0,a_ib_i,0,\dots,0).\end{eqnarray*}
Hence
$$\lambda^{(1,\dots,n)}_{(0,\dots ,0,a_jb_j,0,\dots,0)}=\lambda^{(1,\dots,n)}_{(0,\dots ,0,a_j,0,\dots,0)}\lambda^{(1,\dots,n)}_{(0,\dots ,0,b_j,0,\dots,0)}.$$
Therefore we get that $\alpha^{(j,i)}_{a_jb_j}=\alpha^{(j,i)}_{a_j}\alpha^{(j,i)}_{b_j}$. Hence the map $\alpha^{(j,i)}\colon (B_j,\cdot)\longrightarrow \Aut(B_i,+)$,
defined by $\alpha^{(j,i)}(a_j)=\alpha^{(j,i)}_{a_j}$, is a homomorphism of groups. Now we shall check condition (IM1).  Note that
\begin{eqnarray*}
\lefteqn{(0,\dots,0,\lambda^{(i)}_{a_i}\alpha^{(j,i)}_{(\alpha^{(i,j)}_{a_i})^{-1}(a_j)}(b_i),0,\dots,0)}\\
&=&\lambda^{(1,\dots,n)}_{(0,\dots,0,a_i,0,\dots,0)}(0,\dots,0,\alpha^{(j,i)}_{(\alpha^{(i,j)}_{a_i})^{-1}(a_j)}(b_i),0,\dots,0)\\
&=&\lambda^{(1,\dots,n)}_{(0,\dots,0,a_i,0,\dots,0)}\lambda^{(1,\dots,n)}_{(0,\dots,0,(\alpha^{(i,j)}_{a_i})^{-1}(a_j),0,\dots,0)}(0,\dots,0,b_i,0,\dots,0)\\
&=&\lambda^{(1,\dots,n)}_{\lambda^{(1,\dots,n)}_{(0,\dots,0,a_i,0,\dots,0)}(0,\dots,0,(\alpha^{(i,j)}_{a_i})^{-1}(a_j),0,\dots,0)}\\
&&\lambda^{(1,\dots,n)}_{(\lambda^{(1,\dots,n)}_{\lambda^{(1,\dots,n)}_{(0,\dots,0,a_i,0,\dots,0)}
(0,\dots,0,(\alpha^{(i,j)}_{a_i})^{-1}(a_j),0,\dots,0)})^{-1}(0,\dots,0,a_i,0,\dots,0)}(0,\dots,0,b_i,0,\dots,0)\\
&=&\lambda^{(1,\dots,n)}_{(0,\dots,0,a_j,0,\dots,0)}\lambda^{(1,\dots,n)}_{(\lambda^{(1,\dots,n)}_{(0,\dots,0,a_j,0,\dots,0)})^{-1}(0,\dots,0,a_i,0,\dots,0)}(0,\dots,0,b_i,0,\dots,0)\\
&=&\lambda^{(1,\dots,n)}_{(0,\dots,0,a_j,0,\dots,0)}\lambda^{(1,\dots,n)}_{(0,\dots,0,(\alpha^{(j,i)}_{a_j})^{-1}(a_i),0,\dots,0)}(0,\dots,0,b_i,0,\dots,0)\\
&=&(0,\dots,0,\alpha^{(j,i)}_{a_j}\lambda^{(i)}_{(\alpha^{(j,i)}_{a_j})^{-1}(a_i)}(b_i),0,\dots,0),
\end{eqnarray*}
where in the third equality \cite[Lemma~2]{CJO2} is used.
Hence $\lambda^{(i)}_{a_i}\alpha^{(j,i)}_{(\alpha^{(i,j)}_{a_i})^{-1}(a_j)}=\alpha^{(j,i)}_{a_j}\lambda^{(i)}_{(\alpha^{(j,i)}_{a_j})^{-1}(a_i)}$ and (IM1) is proved.
Similarly one can check that condition (IM2) is satisfied.
Before proving (\ref{cond1}) we claim that
$$\lambda^{(1,\dots,j+1)}_{(a_1,\dots,a_k,0,\dots,0)}(0,\dots,0,a_{k+1},0,\dots,0)=(0,\dots,0,\alpha^{((1,\dots,k),k+1)}_{(a_1,\dots,a_k)}(a_{k+1}),0,\dots,0),$$
for all $1\leq k\leq j<n$.
We will prove the claim by induction on $j$. For $j=1$, we have
$$\lambda^{(1,2)}_{(a_1,0)}(0,a_2)=(0,\alpha^{(1,2)}_{a_1}(a_2)),$$
by the definition of $\lambda^{(1,2)}$.
Suppose that $j>1$ and that the claim is true for $j-1$. For $k=j$ we have that
$$\lambda^{1,\dots,j+1}_{(a_1,\dots,a_j,0)}(0,\dots,0,a_{j+1})=(0,\dots,0,\alpha^{((1,\dots,j),j+1)}_{(a_1,\dots,a_j)}(a_{j+1})),$$
by the definition of $\lambda^{(1,\dots,j+1)}$. For $k<j$ we have
\begin{eqnarray*}
\lefteqn{
\lambda^{(1,\dots,j+1)}_{(a_1,\dots,a_k,0,\dots,0)}(0,\dots,0,a_{k+1},0,\dots,0)}\\&=&(\lambda^{(1,\dots,j)}_{(a_1,\dots,a_k,0,\dots,0)}(0,\dots,0,a_{k+1},0,\dots,0),0)\\
&=&(0,\dots,0,\alpha^{((1,\dots,k),k+1)}_{(a_1,\dots,a_k)}(a_{k+1}),0,\dots,0),
\end{eqnarray*}
where the first equality is by the definition of $\lambda^{(1,\dots,j+1)}$, and the second is by induction hypothesis. Hence the claim follows.
Now we will prove condition (\ref{cond1}).
We have
\begin{eqnarray*}
\lefteqn{(0,\dots,0,\alpha^{((1,\dots,j),j+1)}_{(a_1,\dots,a_{j})}(a_{j+1}))}\\
&=&\lambda^{(1,\dots,j+1)}_{(a_1,\dots,a_j,0)}(0,\dots,0,a_{j+1})\\
&=&\lambda^{(1,\dots,j+1)}_{(a_1,0,\dots,0)(\lambda^{(1,\dots,j+1)}_{(a_1,0,\dots,0)})^{-1}(0,a_2,0,\dots,0)\cdots
\lambda^{(1,\dots,j+1)}_{(a_1,\dots,a_{j-1},0,0)})^{-1}(0,\dots,0,a_j,0)}(0,\dots,0,a_{j+1})\\
&=&\lambda^{(1,\dots,j+1)}_{(a_1,0,\dots,0)((0,\alpha^{(1,2)}_{a_1})^{-1}(a_2),0,\dots,0)\cdots
(0,\dots,0,\alpha^{((1,\dots,j-1),j)}_{(a_1,\dots,a_{j-1})})^{-1}(a_j),0)}(0,\dots,0,a_{j+1})\\
&=&\lambda^{(1,\dots,j+1)}_{(a_1,0,\dots,0)}\lambda^{(1,\dots,j+1)}_{(0,(\alpha^{(1,2)}_{a_1})^{-1}(a_2),0,\dots,0)}\cdots
\lambda^{(1,\dots,j+1)}_{(0,\dots,0,(\alpha^{((1,\dots,j-1),j)}_{(a_1,\dots,a_{j-1})})^{-1}(a_j),0)}(0,\dots,0,a_{j+1})\\
&=&(0,\dots,0,\alpha^{(1,j+1)}_{a_1}\alpha^{(2,j+1)}_{(\alpha^{(1,2)}_{a_1})^{-1}(a_2)}\cdots
\alpha^{(j,j+1)}_{(\alpha^{((1,\dots,j-1),j)}_{(a_1,\dots,a_{j-1})})^{-1}(a_j)}(a_{j+1})),
\end{eqnarray*}
 where the first equality is by the definition of $\lambda^{(1,\dots,j+1)}$, the second follows from (\ref{add_multip}), the third follows by the claim,
 the fourth is because of the properties of the lambda maps and the last follows because
 $\lambda^{(1,\dots,j+1)}_{(0,\dots,0,a_i,0,\dots,0)}(0,\dots,0,a_k,0,\dots,0)=(0,\dots,0,\alpha^{(i,k)}_{a_i}(a_k),0,\dots,0)$ for all $i\neq k$. Therefore (\ref{cond1}) follows.
 To prove (\ref{cond2}), note that
 \begin{eqnarray*}
 \lefteqn{(\alpha^{(j+1,(1,\dots,j))}_{a_{j+1}}(a_1,\dots,a_j),0)}\\
 &=&\lambda^{(1,\dots,j+1)}_{(0,\dots,0,a_{j+1})}(a_1,\dots,a_j,0)\\
 &=&\sum_{i=1}^j \lambda^{(1,\dots,j+1)}_{(0,\dots,0,a_{j+1})}(0,\dots,0,a_i,0,\dots,0)\\
 &=&\sum_{i=1}^j (0,\dots,0,\alpha^{(j+1,i)}_{a_{j+1}}(a_i),0,\dots,0)\\
 &=&(\alpha^{(j+1,1)}_{a_{j+1}}(a_1),\alpha^{(j+1,2)}_{a_{j+1}}(a_2),\dots,\alpha^{(j+1,j)}_{a_{j+1}}(a_j),0).
 \end{eqnarray*}
 Hence (\ref{cond2}) follows. This finishes the proof of the theorem.
\end{proof}

Using the notation of Theorem~\ref{itlb}, for an iterated matched product of left ideals $B=(\dots(B_1\bowtie B_2)\bowtie\dots)\bowtie B_n$, it can be checked that
the $i$-th component of $\lambda^{(1,\dots,n)}_{(a_1,\dots,a_n)}(b_1,\dots,b_n)$ is of the form
\begin{eqnarray}\label{form_lambda}
\alpha^{(1,i)}_{a_1}\alpha^{(2,i)}_{(\alpha^{(1,i)}_{a_1})^{-1}(a_2)}\cdots \alpha^{(i-1,i)}_{(\alpha^{((1,\dots,i-2),i-1)}_{(a_1,\dots,a_{i-2})})^{-1}(a_{i-1})}
\lambda^{(i)}_{(\alpha^{((1,\dots,i-1),i)}_{(a_1,\dots,a_{i-1})})^{-1}(a_{i})}\nonumber\\
\quad\cdot\alpha^{(i+1,i)}_{(\alpha^{((1,\dots,i),i+1)}_{(a_1,\dots,a_{i})})^{-1}(a_{i+1})}\cdots \alpha^{(n,i)}_{(\alpha^{((1,\dots,n-1),n)}_{(a_1,\dots,a_{n-1})})^{-1}(a_{n})}(b_i).
\end{eqnarray}
Note that one can interpret $B_1,\dots, B_n$ as left ideals of $B$ such that the additive group of $B$ is the direct sum of the additive groups of the left ideals $B_i$.
Then $(a_1,\cdots ,a_n)$ corresponds to $a_1+\dots +a_n$, the maps $\alpha$
correspond to some restrictions of the lambda map of $B$ and formula (\ref{form_lambda}) follows from (\ref{add_multip}).

In the remainder of this section we focus on simplicity of left braces that are iterated matched products of left ideals.

 We will use the following easy but
useful result.

\begin{lemma}\label{ideal}  If $I$ is an ideal of a left brace $B$, then
$(\lambda_b-\id)(a)\in I$, for all $a\in B$ and $b\in I$.
\end{lemma}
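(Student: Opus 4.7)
The goal is to show that $\lambda_b(a)-a\in I$ for $b\in I$, $a\in B$. Unpacking the definition, $\lambda_b(a)-a=ba-b-a=ba-(a+b)$, so what I need is to express this additive difference as something manifestly lying in $I$. The natural way to do this is to translate back to the multiplicative side, exploit that $I$ is normal in $(B,\cdot)$ and closed under every $\lambda_x$, and then translate back.

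The key identity I plan to use is
$$x-y=\lambda_y(y^{-1}x),\qquad x,y\in B,$$
which is immediate from the defining relation $y\cdot z=y+\lambda_y(z)$ applied to $z=y^{-1}x$. Applying this with $x=ba$ and $y=a+b$ gives
$$\lambda_b(a)-a=\lambda_{a+b}\bigl((a+b)^{-1}\cdot ba\bigr).$$
So the problem reduces to showing that $(a+b)^{-1}\cdot ba\in I$; once that is done, applying $\lambda_{a+b}$ preserves $I$ (ideal property), and we are finished.

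To analyze $(a+b)^{-1}\cdot ba$ I will write $a+b=a\cdot\lambda_a^{-1}(b)$ (again from $y+z=y\cdot\lambda_y^{-1}(z)$). Since $\lambda\colon(B,\cdot)\to\Aut(B,+)$ is a group homomorphism, $\lambda_a^{-1}=\lambda_{a^{-1}}$, and then $b':=\lambda_{a^{-1}}(b)\in I$ because $I$ is closed under every $\lambda_x$. Hence
$$(a+b)^{-1}\cdot ba=(ab')^{-1}\cdot ba=b'^{-1}\cdot(a^{-1}ba),$$
where $a^{-1}ba\in I$ by normality of $I$ in $(B,\cdot)$ and $b'^{-1}\in I$ because $I$ is a subgroup. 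So $(a+b)^{-1}\cdot ba\in I$, and applying $\lambda_{a+b}$ concludes the argument.

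The only subtle point is the conversion identity $x-y=\lambda_y(y^{-1}x)$; once that is in hand, the proof is just a bookkeeping exercise combining the three defining properties of an ideal (subgroup, normality, $\lambda$-invariance). I do not foresee any real obstacle beyond keeping track of which elements live in $I$ at each step.
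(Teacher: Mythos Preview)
Your proof is correct and uses the same ingredients as the paper (the conversion identity $x-y=\lambda_y(y^{-1}x)$, normality of $I$, and $\lambda$-invariance), but the paper groups the terms more efficiently: instead of writing $ba-b-a=ba-(a+b)$ and then having to unpack $(a+b)^{-1}$, it writes $ba-b-a=(ba-a)-b=\lambda_a(a^{-1}ba)-b$, which lands in $I$ immediately since $a^{-1}ba\in I$, $\lambda_a$ preserves $I$, and $b\in I$. Your detour through $a+b=a\cdot\lambda_{a^{-1}}(b)$ works, but it is avoidable.
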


\begin{proof}
Let $a\in B$ and $b\in I$. Then $ (\lambda_b-\id)(a)=ba-b-a
=\lambda_a(a^{-1}ba)-b\in I$, so the assertion follows.
\end{proof}
Let $B$ be an iterated matched product of its left ideals
$B_{1},\ldots, B_{s}$ of relatively prime orders. Consider the
oriented graph $\Gamma (B)=(V,E)$, defined as follows.
The set of vertices
$V=\{1,\ldots, s\}$   and $(i,j)\in E$ is an
edge if the corresponding map $\alpha^{(i,j)}:B_{i}\longrightarrow
\Aut (B_{j},+)$ is nontrivial. We call $\Gamma (B)$ the graph of
(nontrivial) actions of $B$.

\begin{theorem} \label{graph_simple}
With the above notation and assumptions, assume that every $B_{i}$ is a simple left
brace. Then the left brace $B$ is simple if and only if
$\Gamma=\Gamma(B)$ contains a full (oriented) cycle, i.e. a cycle that contains all vertices.
\end{theorem}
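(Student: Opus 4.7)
The plan is to first classify all ideals of $B$ explicitly as certain ``coordinate'' sums of the $B_i$, and then translate the simplicity condition into the combinatorial condition on $\Gamma(B)$. For every $S \subseteq \{1,\ldots,s\}$ write $B_S := \bigoplus_{i\in S} B_i$, viewed as a subset of $B$.

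The first step is to show that every ideal of $B$ is of the form $B_S$. Given an ideal $I$ of $B$, the intersection $I\cap B_i$ is an ideal of the subbrace $B_i$: additively it is a subgroup of $(B_i,+)$; it is stable under every $\lambda^{(i)}_b=\lambda_b|_{B_i}$ with $b\in B_i$ (since $\lambda_b(I)\subseteq I$ and $\lambda_b(B_i)\subseteq B_i$, as $B_i$ is a left ideal of $B$); and it is normal in $(B_i,\cdot)$ because $I$ is normal in $(B,\cdot)$ and $B_i$ is multiplicatively closed. Simplicity of $B_i$ forces $I\cap B_i\in\{0,B_i\}$. Since the $|B_i|$ are pairwise coprime, the Chinese remainder theorem for the abelian group $(B,+)=\bigoplus_i(B_i,+)$ yields $I=\bigoplus_i(I\cap B_i)=B_S$ with $S=\{\,i:I\cap B_i=B_i\,\}$.

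The second and main step is to prove that $B_S$ is an ideal of $B$ if and only if $\alpha^{(i,j)}$ is trivial for every $i\in S$ and $j\in S^c$, i.e.\ no edge of $\Gamma(B)$ leaves $S$. Stability of $B_S$ under every $\lambda_b$ is immediate from (\ref{form_lambda}), since the $j$-th coordinate of $\lambda_b(a)$ depends on $a$ only through $a_j$. For the forward implication I apply Lemma~\ref{ideal} to $b\in B_i$ with $i\in S$ and $a\in B_j$ with $j\in S^c$: formula (\ref{form_lambda}) collapses to $\lambda_b(a)=\alpha^{(i,j)}_b(a)$ in the $j$-th coordinate, so $(\lambda_b-\id)(a)\in B_S$ forces $\alpha^{(i,j)}_b=\id$ for every $b\in B_i$. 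For the converse, which is the main technical point, inspecting (\ref{form_lambda}) shows that for $b\in B_S$ every factor appearing in the $j$-th coordinate with $j\in S^c$ is the identity on $B_j$: the factors $\alpha^{(k,j)}$ with $k\in S$ are trivial by hypothesis, those with $k\in S^c$ have $b_k=0$ and are therefore evaluated at the neutral element, and $\lambda^{(j)}_{b_j}=\id$ since $b_j=0$. Hence $\lambda_b$ fixes every $S^c$-coordinate. Combined with the observation that $\lambda_a(z)_j$ for $j\in S^c$ depends on $z$ only through $z_j$, this gives $\lambda_{ab}(y)_j=\lambda_a(\lambda_b(y))_j=\lambda_a(y)_j$ for all $j\in S^c$. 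Writing $aba^{-1}=ab+\lambda_{ab}(a^{-1})$ and using $(ab)_j=a_j+(\lambda_a(b))_j=a_j$ (the latter term being a homomorphism applied to $b_j=0$) together with $\lambda_a(a^{-1})=-a$, I conclude $(aba^{-1})_j=a_j-a_j=0$ for all $j\in S^c$, so $aba^{-1}\in B_S$. Therefore $B_S$ is normal in $(B,\cdot)$ and hence an ideal.

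Combining the two steps, $B$ is simple if and only if for every nonempty proper $S\subsetneq\{1,\ldots,s\}$ there is an edge of $\Gamma(B)$ from some vertex of $S$ to some vertex of $S^c$. A standard reachability argument (the set of vertices reachable from any fixed vertex is closed under edges, hence either equals $\{1,\ldots,s\}$ or has no outgoing edges to its complement) shows this is equivalent to $\Gamma(B)$ being strongly connected, and equivalently to the existence of a closed oriented walk visiting all $s$ vertices, which is a full oriented cycle in the sense of the statement. The main obstacle is the normality verification in the converse half of the second step; the remaining ingredients are formal consequences of the additive decomposition, the coprime orders hypothesis, and the explicit formula (\ref{form_lambda}).
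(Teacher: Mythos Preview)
Your argument is correct. The overall strategy---classify all ideals as the sets $B_S$ with $S$ ``closed under outgoing edges,'' then read off simplicity as strong connectivity of $\Gamma(B)$---is a clean reorganization of the paper's proof, which instead treats the two implications separately (propagating a nonzero ideal around a full cycle for sufficiency, and exhibiting $\sum_{j\in W}B_j$ for a proper reachable set $W$ as an ideal for necessity). The essential ingredients coincide: coprimality forces $I=\bigoplus_i(I\cap B_i)$; simplicity of $B_i$ gives $I\cap B_i\in\{0,B_i\}$; and Lemma~\ref{ideal} is the link between ideals and the triviality of the $\alpha^{(i,j)}$. The one genuinely different step is your normality check: you compute $(aba^{-1})_j$ directly via $aba^{-1}=ab+\lambda_{ab}(a^{-1})$ and the coordinate formula~(\ref{form_lambda}), whereas the paper factors $b=c_1\cdots c_s$ multiplicatively via~(\ref{add_multip}) and conjugates by one $c_k$ at a time, using the identity $c^{-1}ac=\lambda_c^{-1}(\lambda_a(c)-c+a)$. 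Your route is slightly more formula-dependent but avoids the two-case split; the paper's route is more elementary in that it never unpacks~(\ref{form_lambda}) fully. Both yield the same characterization, and your interpretation of ``full oriented cycle'' as a closed walk through all vertices (equivalently, strong connectivity) matches what the paper's proof actually establishes.
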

\begin{proof}
Suppose $\Gamma $ contains a full cycle. Let $I$ be a nonzero ideal
of $B$. Choose a nonzero element $b_1+\cdots +b_s\in I$, with
$b_{i}\in B_{i}$. Since $B$ is a direct sum of its additive
subgroups $B_{i}$ of relatively prime orders, it is easy to see that
$0\neq b_i\in I\cap B_{i}$ for some $i$. Let $j\in V$ be such that
$(i,j)\in E$. Hence there exists an element $a_{j}\in B_{j}$ such
that $\alpha^{(i,j)}_{b}(a_{j})\neq a_{j}$. Recall that
$\alpha^{(i,j)}_{a_{i}}(a_j)= a_{i}a_{j}-a_{i}=
\lambda_{a_{i}}(a_{j})$. So, by Lemma~\ref{ideal},  $0\neq
\alpha^{(i,j)}_{b}(a_{j})-a_{j}\in I\cap B_{j}$. It follows that
$B_{j}\subseteq I$ because $B_{j}$ is a simple left brace. Since
$\Gamma $ contains a full cycle, this easily implies that
$B_{k}\subseteq I$ for every $k$, and consequently $I=B$.

Conversely, assume that $\Gamma $ contains no full cycle. Then there
exists $i$ such that  $W=\{ k\in V: \mbox{ there exists a path in }
\Gamma \mbox{ from } i \mbox{ to } k\} \neq V$. Let $I=\sum_{j\in W}
B_{j}$. Clearly, $I$ is a left ideal of $B$. Hence $I$ is
$\lambda$-invariant. We will check that $I$ is an ideal of $B$. Then
the result follows.  Let $b\in B$. Write $b=b_1+\cdots +b_s$, with
$b_i\in B_i$. From (\ref{add_multip}) we know that $b=c_{1}\cdots
c_{s}$ for some $c_{i}\in B_{i}$. Let $a\in B_{j}$ for $j\in W$, so
that $a\in I$. Since $\lambda_{c} (c^{-1}ac) = \lambda_{a}(c) - c
+a$, we get that $c^{-1}ac = \lambda_{c}^{-1}(\lambda_{a}(c)-c+a)$.
If $c\in B_{k}$ for some $k\notin W$ then $\lambda_{a}(c)=c$, so
that $c^{-1}ac =\lambda_{c}^{-1}(a)\in I$. On the other hand, if
$c\in B_{k}$ for some $k\in W$, then $c,a\in I$ and thus
also $c^{-1}ac\in I$.

We know that $a=\sum_{j\in W}a_{j}$, with $a_{j}\in B_{j}, j\in W$.
Therefore, again by (\ref{add_multip}), we also have
$a=a_{j_{1}}\cdots a_{j_{k}}$, where $j_{1},\ldots, j_{k}\in W$. Now
$c^{-1}ac= c^{-1}a_{j_{1}}c c^{-1}a_{j_{2}}c \cdots
c^{-1}a_{j_{k}}c\in I$ for every $c\in B_{i}$ and any $i$. Since
$b=c_{1}\cdots c_{s}$, we get that $b^{-1}ab\in I$. Hence, $I$ is an
ideal of $B$ and the result follows.
\end{proof}

Notice that the proof of the necessity in the above theorem does not
require the hypothesis that $B_{i}$ are simple left braces. Hence, the existence of a full oriented cycle in $\Gamma (B)$ is a necessary condition for $B$ to be simple.

We shall see in Example~\ref{ex} that the following result provides
an effective way for constructing matched products of left braces.

\begin{proposition}\label{morphism}
Let  $B_{1},\ldots, B_{s}$ be left braces. Assume that
$\alpha^{(i,j)}: (B_{i}, \cdot)\longrightarrow \Aut (B_{j},+,\cdot)$
are group homomorphisms, for all $i,j\in \{1,\ldots ,s\}$, $i\neq
j$. Assume also that
\begin{itemize}
\item[(i)]
$\alpha^{(i,j)}_{\alpha^{(k,i)}_{a_{k}}(a_{i})}= \alpha^{(i,j)}_{a_{i}}$,
\item[(ii)] $\alpha^{(j,i)}_{a_{j}} \alpha^{(k,i)}_{a_{k}} = \alpha^{(k,i)}_{a_{k}} \alpha^{(j,i)}_{a_{j}}$,
\end{itemize}
 for all $i,j,k\in \{1,\ldots, s\}$ with $i\neq j$ and $i\neq k$, where $a_{m}\in B_{m}$ for every $m$.
 Then the maps $\alpha^{(i,j)}$ satisfy conditions (IM1) and (IM2) and defining $\alpha^{(j+1, (1,\ldots, j))}$
 and $\alpha^{((1,\ldots, j),j+1)}$ as in (\ref{cond1}) and (\ref{cond2}), we get an iterated matched product of left ideals $B_{1}\bowtie \dots \bowtie B_{s} $.
 \end{proposition}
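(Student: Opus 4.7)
The plan is to proceed in two stages: first verify conditions (IM1) and (IM2) of Theorem~\ref{itlb}, then inductively check that the formulas (\ref{cond1}) and (\ref{cond2}) produce valid matched pairs at every stage, so that Theorem~\ref{itlb} applies.

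For (IM1), I would exploit the brace-automorphism hypothesis $\alpha^{(j,i)}_b\in\Aut(B_i,+,\cdot)$, which gives the intertwining relation $\alpha^{(j,i)}_b\circ\lambda^{(i)}_{(\alpha^{(j,i)}_b)^{-1}(a)}=\lambda^{(i)}_a\circ\alpha^{(j,i)}_b$; condition (i), read as $\alpha^{(j,i)}_{(\alpha^{(i,j)}_a)^{-1}(b)}=\alpha^{(j,i)}_b$, then erases the remaining discrepancy between the two sides of (IM1). For (IM2), condition (i) absorbs both twisted subscripts $(\alpha^{(k,j)}_c)^{-1}(b)$ and $(\alpha^{(j,k)}_b)^{-1}(c)$, reducing each side to a bare composition of two $\alpha^{(\cdot,i)}$'s; condition (ii) then identifies the two compositions.

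For the second stage, the key observation is that iterating (i) collapses (\ref{cond1}) to the commuting product
\[
\alpha^{((1,\ldots,j),j+1)}_{(a_1,\ldots,a_j)} = \alpha^{(1,j+1)}_{a_1}\,\alpha^{(2,j+1)}_{a_2}\cdots\alpha^{(j,j+1)}_{a_j},
\]
with commutativity supplied by (ii). To verify the multiplicative homomorphism property, I would compute the product $(a_1,\ldots,a_j)(b_1,\ldots,b_j)$ in $B_1\bowtie\cdots\bowtie B_j$ via (\ref{form_lambda}); using (ii) together with the brace-automorphism property to move the outer $\alpha^{(k,i)}_{a_k}$ past the central $\lambda^{(i)}$, the $i$-th component reduces to $c_i=a_i\cdot b_i'$ in $B_i$, where $b_i'=\prod_{k\neq i}\alpha^{(k,i)}_{a_k}(b_i)$. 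A final application of (i) gives $\alpha^{(i,j+1)}_{c_i}=\alpha^{(i,j+1)}_{a_ib_i}$, from which the homomorphism property follows. The analogous property for $\alpha^{(j+1,(1,\ldots,j))}$ is immediate from the componentwise formula (\ref{cond2}), and the matched-pair axioms (MP1), (MP2) at each step decompose factor-by-factor into the atomic identities (IM1), (IM2) already established.

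Once each stage produces a genuine matched pair of left braces, the iterated matched product $B_1\bowtie\cdots\bowtie B_s$ is well defined and Theorem~\ref{itlb} immediately yields the left-ideal structure of each $B_i$. The main obstacle will be the bookkeeping in the second stage: one must recognise that the nested subscripts $(\alpha^{((1,\ldots,k-1),k)}_{(a_1,\ldots,a_{k-1})})^{-1}(a_k)$ appearing in both (\ref{cond1}) and (\ref{form_lambda}) are invisible to the outer $\alpha^{(k,\cdot)}$ by repeated use of (i), after which the entire argument is driven by (i), (ii), and the brace-automorphism hypothesis.
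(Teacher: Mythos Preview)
Your proposal is correct and follows the paper's approach for the essential step: verifying (IM1) and (IM2) by using condition~(i) to erase the twisted subscripts and then invoking, respectively, the brace-automorphism identity $\alpha^{(j,i)}_b\circ\lambda^{(i)}_{(\alpha^{(j,i)}_b)^{-1}(a)}=\lambda^{(i)}_a\circ\alpha^{(j,i)}_b$ and the commutation (ii). The paper's proof stops there, asserting that (IM1) and (IM2) suffice by Theorem~\ref{itlb}; your second stage, which checks explicitly that (\ref{cond1}) and (\ref{cond2}) give group homomorphisms and satisfy the matched-pair axioms at each step, is additional care that the paper leaves implicit in its appeal to Theorem~\ref{itlb}. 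Your computation that the $i$-th component of $(a_1,\ldots,a_j)(b_1,\ldots,b_j)$ reduces to $a_i\cdot\bigl(\prod_{k\neq i}\alpha^{(k,i)}_{a_k}(b_i)\bigr)$, whence $\alpha^{(i,j+1)}_{c_i}=\alpha^{(i,j+1)}_{a_i}\alpha^{(i,j+1)}_{b_i}$, is correct and fills that gap cleanly.
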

\begin{proof}
It is enough to verify
conditions (IM1) and (IM2) stated in Theorem~\ref{itlb}.
\begin{eqnarray*}
\lambda^{(i)}_{a_i}\alpha^{(j,i)}_{(\alpha^{(i,j)}_{a_i})^{-1}(a_j)}(b_i)&=&\lambda^{(i)}_{a_i}\alpha^{(j,i)}_{a_j}(b_i)\\
&=&a_i\alpha^{(j,i)}_{a_j}(b_i)-a_i\\
&=&\alpha^{(j,i)}_{a_j}((\alpha^{(j,i)}_{a_j})^{-1}(a_i)b_i-(\alpha^{(j,i)}_{a_j})^{-1}(a_i))\\
&=&\alpha^{(j,i)}_{a_j}\circ\lambda^{(i)}_{(\alpha^{(j,i)}_{a_j})^{-1}(a_i)}(b_i),
\end{eqnarray*}
for $a_i,b_i\in B_i$ and $a_j\in B_j$, where $\lambda^{(i)}$ is the lambda map of $B_i$.
Thus (IM1) follows.
Now we verify condition (IM2).
\begin{eqnarray*}
\lefteqn{
\alpha^{(k,i)}_{a_k}\circ\alpha^{(j,i)}_{(\alpha^{(k,j)}_{a_k})^{-1}(a_j)}}\\
&=&\alpha^{(k,i)}_{a_k}\circ\alpha^{(j,i)}_{a_j}
\;=\;\alpha^{(j,i)}_{a_j}\circ\alpha^{(k,i)}_{a_k}
\;=\;\alpha^{(j,i)}_{a_j}\circ\alpha^{(k,i)}_{(\alpha^{(j,k)}_{a_j})^{-1}(a_k)},
\end{eqnarray*}
for $a_i\in B_i$, $a_j\in B_j$ and $a_k\in B_k$.
Thus the result follows.\end{proof}

\section{Constructions of simple braces}\label{simple}

In this section, we first present  a family of left braces with
trivial socle, that generalizes the family presented by
Heged\H{u}s \cite{H} and Catino and Rizzo in \cite{CR}.
Then we use it to construct a broad family of simple left braces.

Let $p$ be a prime number, and let $r,n$ be positive integers.
Assume $Q$ is a quadratic form  over $(\mathbb{Z}/(p^r))^n$
(considered as a free module over the ring $\mathbb{Z}/(p^r)$) and
suppose $f$ is an element in the orthogonal group of $Q$ (that is,
an element $f\in \aut((\mathbb{Z}/(p^r))^n)$  such that
$Q(f(v))=Q(v)$ for any $v\in (\mathbb{Z}/(p^r))^n$).  Assume that
$f$ has order $p^{r'}$ for some $0\leq r'\leq r$. Consider the
additive abelian group $A=(\mathbb{Z}/(p^r))^{n+1}$. The elements of
$A$ will be written in the form $(\vec{x},\mu)$, with $\vec{x}\in
(\mathbb{Z}/(p^r))^n$ and $\mu\in \mathbb{Z}/(p^r)$. Consider the
maps $\lambda_{(\vec{x},\mu)}\colon A\longrightarrow A$ defined by
\begin{eqnarray}\label{lamb}
\lambda_{(\vec{x},\mu)}(\vec{y},\mu'):=(f^{q(\vec{x},\mu)}(\vec{y}),
\mu'+b(\vec{x},f^{q(\vec{x},\mu)}(\vec{y}))),
\end{eqnarray}
for $(\vec{x},\mu),(\vec{y},\mu')\in A$, where
$q(\vec{x},\mu):=\mu-Q(\vec{x})$, and $b$ is the bilinear form
$b(\vec{x},\vec{y}):=Q(\vec{x}+\vec{y})-Q(\vec{x})-Q(\vec{y})$
associated to $Q$. Note that $\lambda_{(\vec{x},\mu)}$ is
well-defined since $q$ takes values in $\mathbb{Z}/(p^r)$ and $f$ is
of order $p^{r'}$, for some $0\leq r'\leq r$.

Recall that $Q$ is non-degenerate if and only if the matrix  of $b$
in the standard basis of $(\Z/(p^r))^{n}$ is invertible.

\begin{theorem}\label{hegedus}
The abelian group $A$ has a structure of a left brace with lambda map
defined in (\ref{lamb}) and with multiplication given by $a\cdot b = a+\lambda_{a}(b)$.
Moreover, if $Q$ is non-degenerate, then the socle
of this  left brace is
$$\soc (A)=\{ (\vec{0},\mu)\mid \mu\in p^{r'}\mathbb{Z}/(p^r)\}.$$
In particular, if $r'=r$, then the socle of this left brace is zero.
\end{theorem}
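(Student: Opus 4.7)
The plan is to apply the standard characterization of left braces via lambda maps: if $(A,+)$ is an abelian group and $\lambda : A \longrightarrow \aut(A,+)$ is a map satisfying $\lambda_a \circ \lambda_b = \lambda_{a+\lambda_a(b)}$ for all $a,b\in A$, then setting $a\cdot b := a+\lambda_a(b)$ endows $A$ with a left brace structure (see \cite[Lemma~2]{CJO2}). The verification then splits into four parts: (i) the exponent $q(\vec x,\mu)$ may be regarded modulo $p^{r'}$ so that $f^{q(\vec x,\mu)}$ is well defined; (ii) each $\lambda_{(\vec x,\mu)}$ is an additive endomorphism of $A$; (iii) each $\lambda_{(\vec x,\mu)}$ is bijective; and (iv) the cocycle identity holds. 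Step (i) is built into the hypothesis $f^{p^{r'}}=\id$; step (ii) is immediate from bilinearity of $b$ together with the additivity of $f^{q(\vec x,\mu)}$; and bijectivity is then straightforward from the invertibility of $f^{q(\vec x,\mu)}$ and the explicit triangular form of the second coordinate in $\mu'$.

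The core of the proof is step (iv), and the decisive observation is that $q$ is ``additive'' with respect to the prospective multiplication, namely $q(a + \lambda_a(b)) = q(a) + q(b)$. Writing $a=(\vec x,\mu)$, $b=(\vec y,\mu')$ and $k=q(a)$, this reduces to the expansion $Q(\vec x + f^k(\vec y)) = Q(\vec x) + Q(\vec y) + b(\vec x, f^k(\vec y))$, which follows from the definition of $b$ together with $Q\circ f^k = Q$. This identity immediately yields $f^{q(a\cdot b)} = f^{q(a)}\circ f^{q(b)}$, handling the first coordinate of the cocycle condition. For the second coordinate, after expansion and cancellation the identity to verify becomes $b(f^{q(a)}(\vec y),\, f^{q(a)+q(b)}(\vec z)) = b(\vec y,\, f^{q(b)}(\vec z))$, which is nothing but the $f$-invariance of $b$, itself a consequence of $f$ lying in the orthogonal group of $Q$. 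I expect this step to be the main technical obstacle, but once $q$-additivity is established the remaining manipulations are direct.

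For the socle, I will use that $(\vec x,\mu)\in \soc(A)$ is equivalent to $\lambda_{(\vec x,\mu)} = \id$. Setting $\mu'=0$ and letting $\vec y$ vary shows that the first-coordinate condition forces $f^{q(\vec x,\mu)} = \id$, equivalently $p^{r'} \mid q(\vec x,\mu)$. Given this, the second-coordinate condition collapses to $b(\vec x,\vec y) = 0$ for all $\vec y$; non-degeneracy of $Q$ (equivalently, of the matrix of $b$) then forces $\vec x = \vec 0$, so that $q(\vec x,\mu) = \mu$ and the remaining condition is $\mu\in p^{r'}\Z/(p^r)$. The converse inclusion is immediate: any such element acts trivially by the explicit formula. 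When $r'=r$, the set $p^r\Z/(p^r)$ reduces to $\{0\}$, giving the final assertion $\soc(A)=0$. The delicate book-keeping throughout is the distinction between the exponent $q$ modulo $p^{r'}$ and modulo $p^r$, but the orthogonality hypothesis on $f$ reconciles these at every step.
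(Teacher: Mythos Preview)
Your proposal is correct and follows essentially the same approach as the paper: both reduce the brace axioms to the cocycle identity $\lambda_a\lambda_b=\lambda_{a+\lambda_a(b)}$ (the paper cites \cite[Lemma~2.6]{B3} rather than \cite{CJO2}, but the content is the same), and both isolate the key identity $q(a+\lambda_a(b))=q(a)+q(b)$ together with the $f$-invariance of $b$ to verify it; the socle computation is likewise identical. The only cosmetic difference is that the paper writes out the two sides of the cocycle identity in full rather than naming the reductions as you do.
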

\begin{proof}
Since $f$ is bijective, it is clear that  $\lambda_{(\vec{x},\mu)}$
is bijective. By the definition of $\lambda_{(\vec{x},\mu)}$, it
also is clear that it is an automorphism of the abelian group $A$.

To prove the first part of the result, by \cite[Lemma~2.6]{B3}, it
is enough to check that
$\lambda_{(\vec{x},\mu)}\lambda_{(\vec{y},\mu')}=\lambda_{(\vec{x},\mu)+\lambda_{(\vec{x},\mu)}(\vec{y},\mu')}$.
On one side,
\begin{align}
\lefteqn{\lambda_{(\vec{x},\mu)}\lambda_{(\vec{y},\mu')}(\vec{z},\eta)} \nonumber\\
&=\lambda_{(\vec{x},\mu)}(f^{q(\vec{y},\mu')}(\vec{z}),
\eta+b(\vec{y},f^{q(\vec{y},\mu')}(\vec{z}))) \nonumber\\
&=(f^{q(\vec{x},\mu)+q(\vec{y},\mu')}(\vec{z}),
\eta+b(\vec{y},f^{q(\vec{y},\mu')}(\vec{z}))+b(\vec{x},f^{q(\vec{x},\mu)+q(\vec{y},\mu')}(\vec{z}))). \label{newequation}
\end{align}
Note that
\begin{align*}
\lefteqn{q((\vec{x},\mu)+\lambda_{(\vec{x},\mu)}(\vec{y},\mu'))}\\
&=q(\vec{x}+f^{q(\vec{x},\mu)}(\vec{y}), \mu+\mu'+b(\vec{x},f^{q(\vec{x},\mu)}(\vec{y})))\\
&=\mu+\mu'+b(\vec{x},f^{q(\vec{x},\mu)}(\vec{y}))-Q(\vec{x}+f^{q(\vec{x},\mu)}(\vec{y}))\\
&=\mu+\mu'+b(\vec{x},f^{q(\vec{x},\mu)}(\vec{y}))-Q(\vec{x})-Q(f^{q(\vec{x},\mu)}(\vec{y}))
 -b(\vec{x},f^{q(\vec{x},\mu)}(\vec{y}))\\
&=\mu+\mu'-Q(\vec{x})-Q(\vec{y})\\
&=q(\vec{x},\mu)+q(\vec{y},\mu').
\end{align*}
Hence
\begin{eqnarray}\label{q}
&&q((\vec{x},\mu)+\lambda_{(\vec{x},\mu)}(\vec{y},\mu'))=q(\vec{x},\mu)+q(\vec{y},\mu').
\end{eqnarray}

On the other side we have
\begin{align*}
\lefteqn{\lambda_{(\vec{x},\mu)+\lambda_{(\vec{x},\mu)}(\vec{y},\mu')}(\vec{z},\eta)}\\
&=(f^{q((\vec{x},\mu)+\lambda_{(\vec{x},\mu)}(\vec{y},\mu'))}(\vec{z}),\eta+b(\vec{x}+f^{q(\vec{x},\mu)}(\vec{y}),f^{q((\vec{x},\mu)+\lambda_{(\vec{x},\mu)}(\vec{y},\mu))}(\vec{z})))\\
&=(f^{q(\vec{x},\mu)+q(\vec{y},\mu')}(\vec{z}),\eta+b(\vec{x}+f^{q(\vec{x},\mu)}(\vec{y}),f^{q(\vec{x},\mu)+q(\vec{y},\mu')}(\vec{z})))\quad
(\mbox{by
(\ref{q})})\\
&=(f^{q(\vec{x},\mu)+q(\vec{y},\mu')}(\vec{z}),\eta+b(f^{q(\vec{x},\mu)}(\vec{y}),f^{q(\vec{x},\mu)+q(\vec{y},\mu')}(\vec{z}))+b(\vec{x},f^{q(\vec{x},\mu)+q(\vec{y},\mu')}(\vec{z})))\\
&=(f^{q(\vec{x},\mu)+q(\vec{y},\mu')}(\vec{z}),\eta+b(\vec{y},f^{q(\vec{y},\mu')}(\vec{z}))+b(\vec{x},f^{q(\vec{x},\mu)+q(\vec{y},\mu')}(\vec{z}))).
\end{align*}
Therefore,  by (\ref{newequation}),
$\lambda_{(\vec{x},\mu)}\lambda_{(\vec{y},\mu')}=\lambda_{(\vec{x},\mu)+\lambda_{(\vec{x},\mu)}(\vec{y},\mu')}$,
as desired.

To prove the second part of the statement, assume that $Q$ is non-degenerate. Let
$(\vec{x},\mu)$ be an element of the socle of the left brace $A$.
Then $f^{q(\vec{x},\mu)}=\id$ and
$b(\vec{y},f^{q(\vec{x},\mu)}(\vec{x}))=0$ for all $\vec{y}$.
Since $Q$ is non-degenerate, $\vec{x}=0$. On the other hand, since
$f^{q(\vec{x},\mu)}=\id$  and $f$ has order $p^{r'}$, we have
$\mu=\mu-Q(\vec{x})=q(\vec{x},\mu)\in p^{r'}\mathbb{Z}/(p^r)$.
 Therefore, the
result follows.
\end{proof}

\noindent {\bf Notation.}  The left brace described in
Theorem~\ref{hegedus} is denoted by $H(p^r,n,Q,f)$.

\vspace{10pt}
Let $R$ be a ring. For any matrix $A$ over $R$, we denote by $A^t$
the transpose of $A$. Sometimes we identify $R^n$ with the row
matrices of length $n$ over $R$. So, for $x\in R^n$, $x^t$ is the
column transpose of the row $x$.

Now we shall construct iterated matched products of left braces of
the form $H(p^r,n,Q,f)$ and, as a consequence, we will give some new constructions of
finite simple left braces. To do so we will make use  of the
existence of elements $C$ of order $p^r$ in $\GL_n(\Z/({q}^{s}))$
for two different primes $p$ and $q$. Note that the natural image of
$C$ in $\GL_n(\Z/({q}))$ also has order $p^r$ and therefore $p^{r}$
has to divide $(q^{n}-1) \cdots (q^{n}-q^{n-1})$. In particular,
$p\mid q^{t}-1$ for some $1\leq t \leq n$. In light of the necessary
condition for the existence of finite simple left braces mentioned
in the introduction this is a natural assumption which will be
implicitly showing up throughout the paper.

We will fix some notation. Let $s$ be an integer greater than $1$ and
let $p_1,p_2,\ldots ,p_s$ be different prime numbers.  Assume that
$p_1,p_2,\dots ,p_{s-1}$ are odd. For $1\leq i\leq s$, assume that
finite left braces $H_i=H(p_i^{r_i},n_i,Q_i,f_i)$ are constructed as
in Theorem~\ref{hegedus},  with additive groups $(\mathbb
Z/(p_i^{r_i}))^{n_i+1}$ ($r_i$ and $n_i$ are positive integers) and
with the corresponding lambda map defined by
\begin{equation} \label{lambda}
\lambda^{(i)}_{(\vec{x}_i,\mu_i)}(\vec{y}_i,\mu'_i)= (f_{i}^{q_{i}(\vec{x}_i,\mu_i)}(\vec{y}_i), \mu'_i+b_{i}(\vec{x}_i,f_{i}^{q_i(\vec{x}_i,\mu_i)}(\vec{y}_i))),
\end{equation}
where
\begin{itemize}
\item[-] $Q_i$ is a non-degenerate quadratic form over $(\mathbb
Z/(p_i^{r_i}))^{n_i}$,
\item[-]
$f_i$ is an element of  order $p_i^{r'_i}$ in the orthogonal
group determined by $Q_i$,  for some $0\leq r'_i\leq r_i$,
\item[-]
$q_{i}(\vec{x}_i,\mu_i)= \mu_i - Q_{i}(\vec{x}_i)$ (with $\mu_i\in
\mathbb{Z}/(p_{i}^{r_i})$),
\item[-]
$b_i(\vec{x}_i,\vec{y}_i) = Q_i(\vec
{x}_i+\vec{y}_i)-Q_i(\vec{x}_i)-Q_i(\vec{y}_i)$.
\end{itemize}
For $1\leq i< s$, suppose   $c_{i}$ is an element of order
$p_{i+1}^{r_{i+1}}$  in the orthogonal group determined by
$Q_i$, $c_s$ is an element of order $p_{1}^{r_{1}}$ of
$\aut((\mathbb{Z}/(p_s^{r_s}))^{n_s})$ and $v_{s}\in (\mathbb
Z/(p_{s}^{r_{s}}))^{n_{s}}$, such that
\begin{eqnarray}\label{Qj+1}
&&Q_{s}(c_{s}(\vec{x}))=Q_{s}(\vec{x})+v_{s}\vec{x}^t,\end{eqnarray}
and $$f_ic_{i}=c_{i}f_i,$$
 for  $1\leq i\leq s$. For $1\leq i, j\leq
s$, define the maps
$$\alpha^{(j,i)}: (H_{j},\cdot) \longrightarrow \Aut (H_i,+):(\vec{x}_{j},\mu_{j})\mapsto
\alpha^{(j,i)}_{(\vec{x}_{j},\mu_{j})},$$ with
\begin{eqnarray*}&&\alpha^{(k+1,k)}_{(\vec{x}_{k+1},\mu_{k+1})}(\vec{x}_k,\mu_k)=(c_{k}^{q_{k+1}(\vec{x}_{k+1},\mu_{k+1})}(\vec{x}_k),
\mu_k), \mbox{ for }1\leq k<s,\\
&&\alpha^{(1,s)}_{(\vec{x}_1,\mu_1)}(\vec{x}_{s},\mu_{s})
=(c_{s}^{q_{1}(\vec{x}_1,\mu_1)}(\vec{x}_{s}), \mu_{s}+
v_{s}((\id+c_{s}+\dots
+c_{s}^{q_{1}(\vec{x}_1,\mu_1)-1})(\vec{x}_{s}))^t),
\end{eqnarray*}
and $\alpha^{(j,i)}_{(\vec{x}_{j},\mu_{j})}=\id_{H_i}$ otherwise.
It
is easy to check that
$$b_{s}(c_{s}(\vec{x}_{s}),c_{s}(\vec{y}_{s}))=b_{s}(\vec{x}_{s},\vec{y}_{s}).$$
Thus, if  $p_{s}\neq 2$, then $v_{s}=0$.  Note that
$\alpha^{(k+1,k)}_{(\vec{x}_{k+1},\mu_{k+1})}$ is well-defined since
$q_{k+1}$ takes values in $\mathbb{Z}/(p_{k+1}^{r_{k+1}})$, and the
$c_{k}$ are of order $p_{k+1}^{r_{k+1}}$. Similarly
$\alpha^{(1,s)}_{(\vec{x}_1,\mu_1)}$ is well-defined since $q_{1}$
takes values in $\mathbb{Z}/(p_{1}^{r_1})$, and $c_{s}$ is of order
$p_1^{r_1}$. Note also that
$$q_i\colon
(H_i,\cdot)\longrightarrow (\mathbb{Z}/(p_{i}^{r_i}),+)$$ is a
homomorphism of groups,  because
$$(\vec{z},\nu)\cdot(\vec{t},\nu')=(\vec{z},\nu)+\lambda^{(i)}_{(\vec{z},\nu)}(\vec{t},\nu')$$
and by (\ref{q}),
$$q_i((\vec{z},\nu)+\lambda^{(i)}_{(\vec{z},\nu)}(\vec{t},\nu'))=q_i(\vec{z},\nu)+q_i(\vec{t},\nu').$$
Therefore, each $\alpha^{(j,i)}$ is a group homomorphism.

\begin{lemma}\label{q2primes}
With the above notation, for $1\leq i, j\leq s$ and $(\vec{x}_i,\mu_i)\in H_i$,
$$q_i(\alpha^{(j,i)}_{(\vec{x}_{j},\mu_{j})}(\vec{x}_i,\mu_i))=q_i(\vec{x}_i,\mu_i).$$
\end{lemma}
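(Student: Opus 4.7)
The plan is to split into cases depending on the form of $\alpha^{(j,i)}$, as there are essentially three distinct cases to consider based on the definitions above the lemma: (a) the trivial case when $\alpha^{(j,i)}=\id_{H_i}$; (b) the case $(j,i)=(k+1,k)$ for some $1\le k<s$; and (c) the exceptional case $(j,i)=(1,s)$. In case (a) the statement is tautological, so only (b) and (c) require an argument.

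For case (b), the key observation is that $c_k$ lies in the orthogonal group of $Q_k$ by hypothesis. Applying $q_k$ to $\alpha^{(k+1,k)}_{(\vec{x}_{k+1},\mu_{k+1})}(\vec{x}_k,\mu_k)=\bigl(c_k^{q_{k+1}(\vec{x}_{k+1},\mu_{k+1})}(\vec{x}_k),\mu_k\bigr)$ yields $\mu_k-Q_k\bigl(c_k^{q_{k+1}(\vec{x}_{k+1},\mu_{k+1})}(\vec{x}_k)\bigr)$, and since $Q_k$ is preserved by all powers of $c_k$, this equals $\mu_k-Q_k(\vec{x}_k)=q_k(\vec{x}_k,\mu_k)$.

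Case (c) is the main computation and exploits the hypothesis (\ref{Qj+1}), namely $Q_s(c_s(\vec{x}))=Q_s(\vec{x})+v_s\vec{x}^{\,t}$. First I would establish by a straightforward induction on $m\ge 0$ the formula
$$Q_s(c_s^{m}(\vec{x}_s))=Q_s(\vec{x}_s)+v_s\bigl((\id+c_s+\dots+c_s^{m-1})(\vec{x}_s)\bigr)^{t}.$$
Applying this with $m=q_1(\vec{x}_1,\mu_1)$ and comparing with the definition of $\alpha^{(1,s)}_{(\vec{x}_1,\mu_1)}(\vec{x}_s,\mu_s)$, the correction term involving $v_s$ in the second coordinate cancels exactly the discrepancy $Q_s(c_s^{q_1}(\vec{x}_s))-Q_s(\vec{x}_s)$. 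Consequently $q_s$ of the image equals $\mu_s-Q_s(\vec{x}_s)=q_s(\vec{x}_s,\mu_s)$, as required.

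I do not foresee a real obstacle here: the lemma is essentially bookkeeping that records that each $\alpha^{(j,i)}$ has been tailored so as to preserve the quadratic form $q_i$ on $H_i$, and condition (\ref{Qj+1}) is precisely what makes the only ``twisted'' case work out. The mild care needed is to keep the powers $c_s^m$ unambiguous (they make sense because $q_1$ takes values in $\mathbb{Z}/(p_1^{r_1})$ and $c_s$ has order $p_1^{r_1}$), which has already been noted in the paragraph preceding the lemma.
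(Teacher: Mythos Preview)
Your proposal is correct and follows essentially the same argument as the paper: both split into the trivial case, the case $(j,i)=(k+1,k)$ handled by orthogonality of $c_k$, and the case $(j,i)=(1,s)$ handled by iterating condition~(\ref{Qj+1}). The only cosmetic difference is that you spell out the inductive formula $Q_s(c_s^{m}(\vec{x}_s))=Q_s(\vec{x}_s)+v_s\bigl((\id+c_s+\dots+c_s^{m-1})(\vec{x}_s)\bigr)^{t}$ explicitly, whereas the paper simply invokes~(\ref{Qj+1}) at the end of the computation.
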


\begin{proof}
Let $(\vec{x}_k,\mu_k)\in H_k$, for $k=1,\dots ,s$. For $i=1,\dots
,s-1$, we have
\begin{eqnarray*}q_i(\alpha^{(i+1,i)}_{(\vec{x}_{i+1},\mu_{i+1})}(\vec{x}_i,\mu_i))&=&q_i(c_{i}^{q_{i+1}(\vec{x}_{i+1},\mu_{i+1})}(\vec{x}_i),\mu_i)\\
&=&\mu_i-Q_i(c_{i}^{q_{i+1}(\vec{x}_{i+1},\mu_{i+1})}(\vec{x}_i))\\
&=&\mu_i-Q_i(\vec{x}_i)\\
&=&q_i(\vec{x}_i,\mu_i).
\end{eqnarray*}
On the other hand
\begin{eqnarray*}
\lefteqn{q_{s}(\alpha^{(1,s)}_{(\vec{x}_1,\mu_1)}(\vec{x}_{s},\mu_{s}))}\\
&=&q_{s}(c_{s}^{q_{1}(\vec{x}_{1},\mu_{1})}(\vec{x}_{s}),\mu_{s}+v_{s}((\id+c_{s}+\dots
+c_{s}^{q_{1}(\vec{x}_{1},\mu_{1})-1})(\vec{x}_{s}))^t)\\
&=&\mu_{s}+v_{s}((\id+c_{s}+\dots
+c_{s}^{q_{1}(\vec{x}_{1},\mu_{1})-1})(\vec{x}_{s}))^t -Q_{s}(c_{s}^{q_{1}(\vec{x}_{1},\mu_{1})}(\vec{x}_{s}))\\
&=&\mu_{s}-Q_{s}(\vec{x}_{s})\qquad(\mbox{by (\ref{Qj+1})})\\
&=&q_{s}(\vec{x}_{s},\mu_{s}).
\end{eqnarray*}
Therefore, the result follows.
\end{proof}

It follows from the definitions that the map
$\alpha^{(j,i)}_{(\vec{x}_{j},\mu_{j})}$ does not depend directly on
the element $(\vec{x}_{j},\mu_{j})$ but just on the value $q_j
(\vec{x}_{j},\mu_{j})$. Therefore Lemma~\ref{q2primes} leads to the
following consequence.

\begin{lemma}\label{qsprimes}
With the above notation, we have
$$\alpha^{(j,i)}_{\alpha^{(k,j)}_{(\vec{x}_{k},\mu_{k})}(\vec{x}_{j},\mu_{j})}=\alpha^{(j,i)}_{(\vec{x}_{j},\mu_{j})}.$$
\end{lemma}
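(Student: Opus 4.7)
The plan is to exploit a structural observation about the family of homomorphisms $\alpha^{(j,i)}$: in every case where $\alpha^{(j,i)}$ is nontrivial (namely when $(j,i)=(k+1,k)$ for some $1\leq k<s$, or when $(j,i)=(1,s)$), and trivially when $\alpha^{(j,i)}=\id$, the automorphism $\alpha^{(j,i)}_{(\vec{x}_j,\mu_j)}$ depends on its subscript $(\vec{x}_j,\mu_j)$ only through the scalar $q_j(\vec{x}_j,\mu_j)$. Indeed, inspecting the formulas
$$\alpha^{(k+1,k)}_{(\vec{x}_{k+1},\mu_{k+1})}(\vec{x}_k,\mu_k)=(c_{k}^{q_{k+1}(\vec{x}_{k+1},\mu_{k+1})}(\vec{x}_k),\mu_k)$$
and
$$\alpha^{(1,s)}_{(\vec{x}_1,\mu_1)}(\vec{x}_{s},\mu_{s})=(c_{s}^{q_{1}(\vec{x}_1,\mu_1)}(\vec{x}_{s}),\, \mu_{s}+v_{s}((\id+c_{s}+\dots+c_{s}^{q_{1}(\vec{x}_1,\mu_1)-1})(\vec{x}_{s}))^t),$$
one sees that $(\vec{x}_j,\mu_j)$ enters each expression only via the exponent $q_j(\vec{x}_j,\mu_j)$. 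So the assertion to prove reduces to showing $q_j(\alpha^{(k,j)}_{(\vec{x}_k,\mu_k)}(\vec{x}_j,\mu_j))=q_j(\vec{x}_j,\mu_j)$.

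But this is exactly the content of Lemma~\ref{q2primes}, which I would simply cite. Explicitly, set $(\vec{y}_j,\nu_j):=\alpha^{(k,j)}_{(\vec{x}_k,\mu_k)}(\vec{x}_j,\mu_j)$. By Lemma~\ref{q2primes}, $q_j(\vec{y}_j,\nu_j)=q_j(\vec{x}_j,\mu_j)$. Since $\alpha^{(j,i)}_{(\vec{y}_j,\nu_j)}$ depends only on $q_j(\vec{y}_j,\nu_j)$, as observed above, we immediately conclude $\alpha^{(j,i)}_{(\vec{y}_j,\nu_j)}=\alpha^{(j,i)}_{(\vec{x}_j,\mu_j)}$, which is the desired identity.

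The proof is therefore essentially a single-line consequence of Lemma~\ref{q2primes} together with the observation above; there is no real obstacle. The only thing to be checked carefully is the trivial case where $\alpha^{(j,i)}=\id_{H_i}$, but then the equality is immediate since both sides equal $\id_{H_i}$ regardless of the subscripts.
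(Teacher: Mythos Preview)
Your proof is correct and follows exactly the same approach as the paper: the paper also observes (in the paragraph immediately preceding the lemma) that $\alpha^{(j,i)}_{(\vec{x}_j,\mu_j)}$ depends on $(\vec{x}_j,\mu_j)$ only through $q_j(\vec{x}_j,\mu_j)$, and then states the lemma as an immediate consequence of Lemma~\ref{q2primes}.
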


\begin{lemma}\label{auto}
With the above notation, we have that
$\alpha^{(j,i)}_{(\vec{x}_{j},\mu_{j})}\in \aut(H_i,+,\cdot)$.
\end{lemma}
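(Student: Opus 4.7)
The plan is to verify that each additive automorphism $\alpha = \alpha^{(j,i)}_{(\vec{x}_j,\mu_j)}$ also respects the multiplication, i.e., that it intertwines the lambda map:
\[\alpha \circ \lambda^{(i)}_a \;=\; \lambda^{(i)}_{\alpha(a)} \circ \alpha \qquad \text{for every } a \in H_i.\]
Since $\alpha$ is already additive and $a \cdot b = a + \lambda_a(b)$, this intertwining property is equivalent to $\alpha$ being a brace automorphism. All cases for which $\alpha = \id_{H_i}$ are immediate, so only $(j,i) = (k+1, k)$ with $1 \leq k < s$ and $(j,i) = (1, s)$ require work.

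For the case $(k+1, k)$, set $m = q_{k+1}(\vec{x}_{k+1}, \mu_{k+1})$, so that $\alpha(\vec{y}, \nu) = (c_k^m(\vec{y}), \nu)$. Expanding both sides of the intertwining relation and comparing components, the first components match because $c_k$ commutes with $f_k$, and the second components match because $c_k \in O(Q_k)$ preserves the bilinear form $b_k$. The exponents of $f_k$ on the two sides coincide by Lemma~\ref{q2primes} applied to $q_k$. This case is routine.

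The case $(1, s)$ is the delicate one, since $c_s$ is only a twisted isometry for $Q_s$ via~(\ref{Qj+1}). Writing $m = q_1(\vec{x}_1, \mu_1)$ and $S_m = \id + c_s + \cdots + c_s^{m-1}$, expansion of both sides reduces, after using $b_s(c_s(\vec{x}), c_s(\vec{y})) = b_s(\vec{x}, \vec{y})$ (a direct consequence of~(\ref{Qj+1}), noted in the excerpt), Lemma~\ref{q2primes} for $q_s$, and $c_s f_s = f_s c_s$ (so that $S_m$ commutes with $f_s^k$), to the single identity
\[v_s S_m(f_s^k(\vec{z}_s))^t \;=\; v_s S_m(\vec{z}_s)^t \qquad \text{for all } \vec{z}_s,\]
where $k = q_s(\vec{y}_s, \mu'_s)$. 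Commuting $f_s^k$ past $S_m$, this in turn reduces to the row-vector identity $v_s f_s = v_s$.

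The main obstacle is therefore to establish $v_s f_s = v_s$, which is not among the stated hypotheses. The strategy is to compute $Q_s(c_s f_s(\vec{x}))$ in two ways. On one hand, substituting $f_s(\vec{x})$ into~(\ref{Qj+1}) and using $f_s \in O(Q_s)$ gives $Q_s(\vec{x}) + v_s f_s(\vec{x})^t$. On the other hand, using $c_s f_s = f_s c_s$ together with $f_s \in O(Q_s)$ reduces $Q_s(c_s f_s(\vec{x}))$ to $Q_s(c_s(\vec{x}))$, and~(\ref{Qj+1}) gives $Q_s(\vec{x}) + v_s \vec{x}^t$. Equating the two expressions yields $v_s f_s(\vec{x})^t = v_s \vec{x}^t$ for every $\vec{x}$, whence $v_s f_s = v_s$. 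With this identity in hand the compatibility in the $(1,s)$ case closes, completing the verification that $\alpha^{(j,i)}_{(\vec{x}_j,\mu_j)} \in \aut(H_i, +, \cdot)$ in all cases.
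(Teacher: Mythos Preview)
Your proof is correct and follows essentially the same approach as the paper's: reduce to the intertwining relation $\alpha\lambda^{(i)}_a=\lambda^{(i)}_{\alpha(a)}\alpha$, handle the $(k+1,k)$ case via $c_kf_k=f_kc_k$, $c_k\in O(Q_k)$ and Lemma~\ref{q2primes}, and in the $(1,s)$ case derive and use the key identity $v_s f_s(\vec{y})^t=v_s\vec{y}^t$ from $c_sf_s=f_sc_s$, $f_s\in O(Q_s)$ and (\ref{Qj+1}). The only difference is expository order: the paper establishes $v_sf_s=v_s$ first and then computes, whereas you compute, isolate this identity as the residual obligation, and then prove it; the argument and ingredients are identical.
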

\begin{proof}
Since $\alpha^{(j,i)}_{(\vec{x}_{j},\mu_{j})}\in \aut (H_i,+)$, to
prove the result it is enough  to show that
\begin{eqnarray}\label{alpha}
&&\alpha^{(j,i)}_{(\vec{x}_{j},\mu_{j})}\lambda^{(i)}_{(\vec{x}_i,\mu_i)}(\vec{y}_i,\mu'_i)
=\lambda^{(i)}_{\alpha^{(j,i)}_{(\vec{x}_{j},\mu_{j})}(\vec{x}_i,\mu_i)}\alpha^{(j,i)}_{(\vec{x}_{j},\mu_{j})}(\vec{y}_i,\mu'_i).\end{eqnarray}
For $1\leq i<s$ we have that
\begin{eqnarray*}\lefteqn{
\alpha^{(i+1,i)}_{(\vec{x}_{i+1},\mu_{i+1})}\lambda^{(i)}_{(\vec{x}_i,\mu_i)}(\vec{y}_i,\mu'_i)}\\
&=&
\alpha^{(i+1,i)}_{(\vec{x}_{i+1},\mu_{i+1})}(f_i^{q_i(\vec{x}_i,\mu_i)}(\vec{y}_i),\mu'_i+b_i(\vec{x}_i,f_i^{q_i(\vec{x}_i,\mu_i)}(\vec{y}_i)))\\
&=&
(c_i^{q_{i+1}(\vec{x}_{i+1},\mu_{i+1})}f_i^{q_i(\vec{x}_i,\mu_i)}(\vec{y}_i),\mu'_i+b_i(\vec{x}_i,f_i^{q_i(\vec{x}_i,\mu_i)}(\vec{y}_i))),
\end{eqnarray*}
and
\begin{eqnarray*}\lefteqn{
\lambda^{(i)}_{\alpha^{(i+1,i)}_{(\vec{x}_{i+1},\mu_{i+1})}(\vec{x}_i,\mu_i)}\alpha^{(i+1,i)}_{(\vec{x}_{i+1},\mu_{i+1})}(\vec{y}_i,\mu'_i)}\\
&=&
\lambda^{(i)}_{\alpha^{(i+1,i)}_{(\vec{x}_{i+1},\mu_{i+1})}(\vec{x}_i,\mu_i)}(c_i^{q_{i+1}(\vec{x}_{i+1},\mu_{i+1})}(\vec{y}_i),\mu'_i)\\
&=&
(f_i^{q_{i}(\alpha^{(i+1,i)}_{(\vec{x}_{i+1},\mu_{i+1})}(\vec{x}_i,\mu_i))}c_i^{q_{i+1}(\vec{x}_{i+1},\mu_{i+1})}(\vec{y}_i),\mu'_i\\
&&\qquad +b_i(c_i^{q_{i+1}(\vec{x}_{i+1},\mu_{i+1})}(\vec{x}_{i}),f_i^{q_{i}(\alpha^{(i+1,i)}_{(\vec{x}_{i+1},\mu_{i+1})}(\vec{x}_i,\mu_i))}c_i^{q_{i+1}(\vec{x}_{i+1},\mu_{i+1})}(\vec{y}_i)))\\
&=&
(c_i^{q_{i+1}(\vec{x}_{i+1},\mu_{i+1})}f_i^{q_{i}(\vec{x}_i,\mu_i)}(\vec{y}_i),\mu'_i\\
&&\qquad +b_i(\vec{x}_{i},f_i^{q_{i}(\vec{x}_i,\mu_i)}(\vec{y}_i)))
\quad (\mbox{by Lemma~\ref{q2primes} and because }f_ic_i=c_if_i).
\end{eqnarray*}
Hence
\begin{eqnarray*}
&&\alpha^{(i+1,i)}_{(\vec{x}_{i+1},\mu_{i+1})}\lambda^{(i)}_{(\vec{x}_i,\mu_i)}(\vec{y}_i,\mu'_i)
=\lambda^{(i)}_{\alpha^{(i+1,i)}_{(\vec{x}_{i+1},\mu_{i+1})}(\vec{x}_i,\mu_i)}\alpha^{(i+1,i)}_{(\vec{x}_{i+1},\mu_{i+1})}(\vec{y}_i,\mu'_i).\end{eqnarray*}
Because $f_s c_s =c_s f_s$ and since $f_s$ is orthogonal with
respect to $Q_s$, using (\ref{Qj+1}), one easily verifies that
$v_s f_s (\vec{y}^{t}) =v_s \vec{y}^{t}$. Hence, we also have that
\begin{eqnarray*}\lefteqn{
\alpha^{(1,s)}_{(\vec{x}_{1},\mu_{1})}\lambda^{(s)}_{(\vec{x}_s,\mu_s)}(\vec{y}_s,\mu'_s)}\\
&=&
\alpha^{(1,s)}_{(\vec{x}_{1},\mu_{1})}(f_s^{q_s(\vec{x}_s,\mu_s)}(\vec{y}_s),\mu'_s+b_s(\vec{x}_s,f_s^{q_s(\vec{x}_s,\mu_s)}(\vec{y}_s)))\\
&=&
(c_s^{q_{1}(\vec{x}_{1},\mu_{1})}f_s^{q_s(\vec{x}_s,\mu_s)}(\vec{y}_s),\mu'_s+b_s(\vec{x}_s,f_s^{q_s(\vec{x}_s,\mu_s)}(\vec{y}_s))\\
&&\qquad +v_s((\id+c_s+\dots
+c_s^{q_{1}(\vec{x}_{1},\mu_{1})-1})(f_s^{q_s(\vec{x}_s,\mu_s)}(\vec{y}_s)))^t)\\
&=&
(c_s^{q_{1}(\vec{x}_{1},\mu_{1})}f_s^{q_s(\vec{x}_s,\mu_s)}(\vec{y}_s),\mu'_s+b_s(\vec{x}_s,f_s^{q_s(\vec{x}_s,\mu_s)}(\vec{y}_s))\\
&&\qquad +v_s((\id+c_s+\dots
+c_s^{q_{1}(\vec{x}_{1},\mu_{1})-1})(\vec{y}_s))^t)\quad(\mbox{by (\ref{Qj+1})})\\
\end{eqnarray*}
and
\begin{eqnarray*}\lefteqn{
\lambda^{(s)}_{\alpha^{(1,s)}_{(\vec{x}_{1},\mu_{1})}(\vec{x}_s,\mu_s)}\alpha^{(1,s)}_{(\vec{x}_{1},\mu_{1})}(\vec{y}_s,\mu'_s)}\\
&=&
\lambda^{(s)}_{\alpha^{(1,s)}_{(\vec{x}_{1},\mu_{1})}(\vec{x}_s,\mu_s)}(c_s^{q_{1}(\vec{x}_{1},\mu_{1})}(\vec{y}_s),\mu'_s\\
&&\qquad +v_s((\id+c_s+\dots
+c_s^{q_{1}(\vec{x}_{1},\mu_{1})-1})(\vec{y}_s))^t)\\
&=&
(f_s^{q_{s}(\alpha^{(1,s)}_{(\vec{x}_{1},\mu_{1})}(\vec{x}_s,\mu_s))}c_s^{q_{1}(\vec{x}_{1},\mu_{1})}(\vec{y}_s),\mu'_s\\
&&\qquad +v_s((\id+c_s+\dots
+c_s^{q_{1}(\vec{x}_{1},\mu_{1})-1})(\vec{y}_s))^t)\\
&&\qquad +b_s(c_s^{q_{1}(\vec{x}_{1},\mu_{1})}(\vec{x}_{s}),f_s^{q_{s}(\alpha^{(1,s)}_{(\vec{x}_{1},\mu_{1})}(\vec{x}_s,\mu_s))}c_s^{q_{1}(\vec{x}_{1},\mu_{1})}(\vec{y}_s)))\\
&=&
(c_s^{q_{1}(\vec{x}_{1},\mu_{1})}f_s^{q_{s}(\vec{x}_s,\mu_s)}(\vec{y}_s),\mu'_s\\
&&\qquad +v_s((\id+c_s+\dots
+c_s^{q_{1}(\vec{x}_{1},\mu_{1})-1})(\vec{y}_s))^t)\\
&&\qquad +b_s(\vec{x}_{s},f_s^{q_{s}(\vec{x}_s,\mu_s)}(\vec{y}_s)))
\quad (\mbox{by Lemma~\ref{q2primes} and because }f_sc_s=c_sf_s).
\end{eqnarray*}
Hence
\begin{eqnarray*}
&&\alpha^{(1,s)}_{(\vec{x}_{1},\mu_{1})}\lambda^{(s)}_{(\vec{x}_s,\mu_s)}(\vec{y}_s,\mu'_s)
=\lambda^{(s)}_{\alpha^{(1,s)}_{(\vec{x}_{1},\mu_{1})}(\vec{x}_s,\mu_s)}\alpha^{(1,s)}_{(\vec{x}_{1},\mu_{1})}(\vec{y}_s,\mu'_s),\end{eqnarray*}
and the result follows.
\end{proof}

\begin{lemma}\label{commute}
With the above notation,  we have that
$$\alpha^{(k,i)}_{(\vec{x}_k,\mu_k)}\circ\alpha^{(j,i)}_{(\vec{x}_j,\mu_j)}
=\alpha^{(j,i)}_{(\vec{x}_j,\mu_j)}\circ\alpha^{(k,i)}_{(\vec{x}_k,\mu_k)},$$
for
all  $i,j,k\in \{ 1,2,\dots ,s\}$, $j\neq i$, $k\neq i$ and $k\neq
j$.
\end{lemma}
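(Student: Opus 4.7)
The plan is to exploit the very sparse structure of the non-trivial $\alpha^{(j,i)}$'s. Looking at the definitions directly preceding the lemma, the only pairs $(j,i)$ for which $\alpha^{(j,i)}$ is not the identity on $H_i$ are those of the form $(k+1,k)$ with $1\leq k<s$ together with the single wrap-around pair $(1,s)$. In other words, for each fixed target index $i\in\{1,\dots,s\}$, there is exactly one source index $p(i)$ for which $\alpha^{(p(i),i)}$ can act non-trivially, namely $p(i)=i+1$ if $i<s$ and $p(s)=1$.

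Given this, I would prove the lemma by a trivial case analysis. Fix $i,j,k\in\{1,\dots,s\}$ which are pairwise distinct. Since $j\neq k$, at most one of $j,k$ can equal the unique index $p(i)$. Therefore at least one of $\alpha^{(j,i)}_{(\vec{x}_j,\mu_j)}$ and $\alpha^{(k,i)}_{(\vec{x}_k,\mu_k)}$ equals $\id_{H_i}$, and so the two automorphisms commute automatically:
\[
\alpha^{(k,i)}_{(\vec{x}_k,\mu_k)}\circ\alpha^{(j,i)}_{(\vec{x}_j,\mu_j)}
=\alpha^{(j,i)}_{(\vec{x}_j,\mu_j)}\circ\alpha^{(k,i)}_{(\vec{x}_k,\mu_k)}.
\]

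There is no real obstacle here: the construction has been deliberately arranged so that each additive component $H_i$ receives a nontrivial action from only one other component, making condition (ii) of Proposition~\ref{morphism} (which is the only reason Lemma~\ref{commute} is needed) hold for free. The content of the construction is concentrated in Lemma~\ref{q2primes}, Lemma~\ref{qsprimes} and Lemma~\ref{auto}; Lemma~\ref{commute} itself is just a bookkeeping check. In the actual write-up I would simply observe the above, list the two trivial subcases ($j=p(i)$ and $k\neq p(i)$, or $k=p(i)$ and $j\neq p(i)$, or neither), and conclude.
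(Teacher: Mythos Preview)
Your proof is correct and is essentially the same as the paper's own argument: the paper simply recalls that $\alpha^{(j,i)}_{(\vec{x}_j,\mu_j)}=\id_{H_i}$ unless $(j,i)$ lies in $\{(l+1,l):1\le l<s\}\cup\{(1,s)\}$, which is exactly your observation that each target index $i$ has a unique nontrivial source $p(i)$. Your write-up just spells out the consequence in slightly more detail.
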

\begin{proof}
This follows from the definition of the maps $\alpha^{(j,i)}$
(recall that $\alpha^{(j,i)}_{(\vec{x}_j,\mu_j)}=\id_{H_i}$ if
$(j,i)\notin\{ (l+1,l)\mid l=1,\dots ,s-1\}\cup\{ (s,1)\}$).
\end{proof}

By Lemmas~\ref{qsprimes}, \ref{auto} and~\ref{commute} the maps
$\alpha^{(j,i)}$ satisfy the hypothesis of
Proposition~\ref{morphism}. Therefore  the maps $\alpha^{(j,i)}$,
with $1\leq i ,j\leq s$, define an iterated matched product
$H_1\bowtie\dots\bowtie H_s$ of left ideals.

\begin{theorem}\label{sprimes}
With the above notation,  the left brace $H_1\bowtie\dots \bowtie
H_s$ is simple if and only if $c_i-\id$ is an automorphism for all
$1\leq i\leq s$.
\end{theorem}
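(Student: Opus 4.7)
The plan is to prove both implications using Lemma~\ref{ideal} as the main tool. Observe first that the graph $\Gamma(B)$ of nontrivial actions of $B=H_1\bowtie\dots\bowtie H_s$ has edges $(i+1,i)$ for $1\leq i<s$ together with $(1,s)$, so it is a single oriented cycle covering all vertices. Theorem~\ref{graph_simple} does not apply directly, since each $H_i$, of prime-power order larger than $p_i$, is not simple; nevertheless, the cycle structure still drives the argument, and Lemma~\ref{ideal} will serve as the ``transport'' tool around the cycle.

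For the direction ``$B$ simple $\Rightarrow$ each $c_k-\id$ is an automorphism'' I argue contrapositively. The converse part of Theorem~\ref{graph_simple} (whose proof does not require simplicity of the factors) already forces $c_k\neq \id$ for every $k$, since otherwise $\Gamma(B)$ would lack a full cycle. Now assume $c_k-\id$ is not bijective for some $k$. The order of $c_k$ is a power of a prime distinct from $p_k$, hence a unit in $\Z/(p_k^{r_k})$, so Maschke's averaging argument over $\Z/(p_k^{r_k})$ yields a $c_k$-stable direct-summand decomposition of $(\Z/(p_k^{r_k}))^{n_k}$; in particular $W:=\ker(c_k-\id)$ is a proper nonzero submodule. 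The commutation $c_kf_k=f_kc_k$ makes $W$ also $f_k$-invariant. From $W$ I construct a proper nonzero ideal of $B$ supported in the $k$-th coordinate: invariance under every $\lambda_b$ follows from the explicit form of $\alpha^{(k+1,k)}$ (or $\alpha^{(1,s)}$ when $k=s$) and of $\lambda^{(k)}$, while normality in $(B,\cdot)$ is checked via the identity $c^{-1}ac=\lambda_c^{-1}(\lambda_a(c)-c+a)$, with the failure of surjectivity of $c_k-\id$ preventing the commutators from escaping the candidate subset.

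For the converse, assume each $c_k-\id$ is an automorphism and let $I$ be a nonzero ideal of $B$. Since the primes $p_1,\dots,p_s$ are pairwise distinct, $(B,+)$ is the direct sum of its Sylow subgroups $H_i$, so $I=\bigoplus_{i=1}^{s}I_i$ with $I_i:=I\cap H_i$. Choose $k$ with $I_k\neq 0$ and pick a nonzero $c=(\vec y_k,\nu_k)\in I_k$. By Lemma~\ref{ideal}, $(\lambda_c-\id)(a)\in I$ for every $a\in B$. Embedding $c$ in the $k$-th coordinate and taking $a=(\vec y_{k-1},0)$ in the $(k-1)$-th coordinate (indices cyclic modulo $s$), the fact that $\alpha^{(k,k-1)}$ is the only nontrivial map among $\alpha^{(k,\cdot)}$ yields
\[
((c_{k-1}^{q_k(c)}-\id)(\vec y_{k-1}),\,0)\in I_{k-1}\quad\text{for every }\vec y_{k-1}.
\]
Since $q_k\colon(H_k,\cdot)\to\Z/(p_k^{r_k})$ is a group homomorphism and $I_k$ is a nontrivial normal subgroup of $(H_k,\cdot)$, I select $c\in I_k$ with $q_k(c)$ a unit modulo $p_k^{r_k}$; in the auxiliary case $q_k|_{I_k}\equiv 0$ I first cycle once around $\Gamma(B)$ via Lemma~\ref{ideal} to create such an element. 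Then $c_{k-1}^{q_k(c)}-\id=(c_{k-1}-\id)(\id+c_{k-1}+\dots+c_{k-1}^{q_k(c)-1})$ is also an automorphism---semisimplicity of $c_{k-1}$ modulo $p_{k-1}$ together with $\gcd(q_k(c),p_k)=1$ makes the second factor a unit, and the first is an automorphism by hypothesis---so $(\Z/(p_{k-1}^{r_{k-1}}))^{n_{k-1}}\times\{0\}\subseteq I_{k-1}$. A further application of $\lambda^{(k-1)}$ to these elements, combined with the non-degeneracy of $b_{k-1}$, lifts this to $H_{k-1}\subseteq I$; iterating around the full cycle yields $H_j\subseteq I$ for every $j$, and therefore $I=B$.

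The main obstacles are, on the one hand, verifying in the forward direction that the candidate subset built from $\ker(c_k-\id)$ is genuinely normal in $(B,\cdot)$ and not merely a left ideal, and, on the other hand, producing in the backward direction an element of $I_k$ whose $q_k$-value is a unit when no such element is immediately at hand; both steps rely on careful use of the formulas and identities established in Section~\ref{iterated}, and on the interplay between $c_s$, $v_s$ and the bilinear form $b_s$ at the distinguished edge $(1,s)$ of the cycle.
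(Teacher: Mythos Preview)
Your backward direction is close to the paper's, though the step ``select $c\in I_k$ with $q_k(c)$ a unit'' needs more justification than you give; in particular, the fallback ``cycle once around $\Gamma(B)$'' cannot work as stated, since if $q_k(c)=0$ then $(\lambda_c-\id)$ acts trivially on $H_{k-1}$ and nothing propagates. The paper instead first uses $\lambda$-invariance inside $H_k$ (applying $\lambda_{(\vec y_k,Q_k(\vec y_k))}$ with $q_k=0$) to produce $(\vec 0,p_k^{s_k})\in I_k$, then propagates with this element to obtain a vector in $I_{k-1}$ with a unit entry, and only then extracts $w_{k-1}=(\vec 0,\dots,\vec 0,1,\vec 0,\dots)\in I$. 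After cycling, all $w_j$ lie in $I$, and both directions are finished simultaneously by computing that the ideal generated by $\{w_1,\dots,w_s\}$ has $j$-th vector component exactly $\operatorname{Im}(f_jc_j-\id)$, which is the full module iff $c_j-\id$ is invertible.

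The forward direction, however, has a genuine gap: there is \emph{no} proper nonzero ideal of $B$ supported in a single coordinate $H_k$, so your construction from $W=\ker(c_k-\id)$ cannot succeed. Indeed, let $J\subseteq H_k$ be a nonzero ideal of $B$ and pick $(\vec w,\nu)\in J\setminus\{0\}$. If $\vec w\neq\vec 0$, then applying $\lambda_g$ with $g=(\vec y_k,Q_k(\vec y_k))\in H_k$ (so $q_k(g)=0$) and subtracting yields $(\vec 0,b_k(\vec y_k,\vec w))\in J$; non-degeneracy of $Q_k$ gives $(\vec 0,u)\in J$ with $u\neq 0$. If $\vec w=\vec 0$ we already have such an element. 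Either way $J$ contains some $a=(\vec 0,\nu)$ with $q_k(a)=\nu\neq 0$, and then Lemma~\ref{ideal} with $b=(\vec y_{k-1},0)\in H_{k-1}$ gives $((c_{k-1}^{\nu}-\id)(\vec y_{k-1}),0)\in J$, which is nonzero for suitable $\vec y_{k-1}$ since $c_{k-1}$ has order $p_k^{r_k}$. Hence $J\not\subseteq H_k$. The witnessing proper ideal in the forward direction must therefore have nontrivial components in \emph{every} factor; the paper's choice is precisely $\langle w_1,\dots,w_s\rangle$, whose $j$-th scalar part is all of $\Z/(p_j^{r_j})$ and whose $j$-th vector part is $\operatorname{Im}(f_jc_j-\id)$, proper exactly when some $c_j-\id$ fails to be an automorphism.
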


\begin{proof} Let $I$
 be a nonzero ideal of $H_1\bowtie \dots\bowtie H_s$.
Let $(\vec{x}_1,\mu_1,\dots ,\vec{x}_{s},\mu_{s})\in I$ be a
nonzero element. Suppose that $\vec{x}_i\neq 0$ for some $i$.
Since the orders of the left braces $H_i$ are pairwise coprime, we
may assume that $\vec{x}_k=\vec{0}$ and $\mu_{k}=0$ for each $k\neq i$,
and $\vec{x}_i\neq \vec{0}$. Note that for
$(\vec{y}_j,Q_j(\vec{y}_j))\in H_j$ we have that
$q_j(\vec{y}_j,Q_j(\vec{y}_j))=0$, for every $j$. Hence, by (\ref{lambda})
\begin{eqnarray*}
&&\lambda^{(i)}_{(\vec{y}_i,Q_i(\vec{y}_i))}(\vec{x}_i,\mu_i)=(\vec{x}_i,\mu_i+b_i(\vec{y}_i,\vec{x}_i)).
\end{eqnarray*}
 So, $(\vec{0},0,\dots,\vec{0},b_i(\vec{y}_i,\vec{x}_i),\dots,
\vec{0},0)\in I$ (where $b_i(\vec{y}_i,\vec{x}_i)$ is in position
$2i$). Since $Q_i$ is non-degenerate and $\vec{x}_i\neq \vec{0}$,
there exists $\vec{y}_i$ such that $b_i(\vec{y}_i,\vec{x}_i)\neq 0$.
We may assume that $b_i(\vec{y}_i,\vec{x}_i)=p_i^{s_i}$, for some
$0\leq s_i<r_i$.

On the other hand, if $\vec{x}_k=0$ for all $k=1,\dots ,s$, then
$\mu_i\neq 0$ for some $i$. In this case, we also get that
$(\vec{0},0,\dots,\vec{0},p_i^{s_i},\dots, \vec{0},0)\in I$ (where
$p_i^{s_i}$ is in position $2i$),  for some $0\leq s_i<r_i$.

Thus, without loss of generality, we may assume that
$\vec{x}_l=\vec{0}$, for all $l=1,\dots ,s$,  and there exists $i$ such that $\mu_i=p_i^{s_i}$ and
$\mu_k=0$ for each $k\neq i$.  By
Lemma~\ref{ideal}, if $i=1$, then
\begin{eqnarray*}
\lefteqn{(\lambda_{(\vec{x}_1,\mu_1,\dots,
\vec{x}_{s},\mu_{s})}-\id)(\vec{y}_1,\mu'_1,\dots,
\vec{y}_{s},\mu'_{s})}\\
&=&((\lambda^{(1)}_{(\vec{x}_1,\mu_1)}-\id)(\vec{y}_1,\mu'_1),\vec{0},0,\dots,\vec{0},0,
(\alpha^{(1,s)}_{(\vec{x}_1,\mu_1)}-\id)(\vec{y}_{s},\mu'_{s}))\\
&=&((f_1^{\mu_1}-\id)(\vec{y}_1),0,\vec{0},0,\dots,\vec{0},0,
(c_s^{\mu_1}-\id)(\vec{y}_{s}),\\
&&\qquad v_s((\id+c_s+\dots+c_s^{\mu_1-1})(\vec{y}_s))^t) \in I,
\end{eqnarray*}
for all $(\vec{y}_1,\mu'_1,\dots, \vec{y}_{s},\mu'_{s})\in
H_1\bowtie \dots\bowtie H_s$.
If $1<i\leq s$, then
\begin{eqnarray*}
\lefteqn{(\lambda_{(\vec{x}_1,\mu_1,\dots,
\vec{x}_{s},\mu_{s})}-\id)(\vec{y}_1,\mu'_1,\dots,
\vec{y}_{s},\mu'_{s})}\\
&=&(\vec{0},0,\dots,(\alpha^{(i,i-1)}_{(\vec{x}_i,\mu_i)}-\id)(\vec{y}_{i-1},\mu'_{i-1}),(\lambda^{(i)}_{(\vec{x}_i,\mu_i)}-\id)(\vec{y}_i,\mu'_i),\dots,
\vec{0},0)\\
&=&(\vec{0},0,\dots,(c_{i-1}^{\mu_i}-\id)(\vec{y}_{i-1}),0,(f_i^{\mu_i}-\id)(\vec{y}_i),0,\dots,
\vec{0},0) \in I.
\end{eqnarray*}
Note that if $1<i\leq s$, then the endomorphism of
$(\mathbb{Z}/(p_{i-1}))^{n_{i-1}}$ induced by
$c_{i-1}^{p_i^{s_i}}-\id$ is nonzero and thus there exists
$\vec{y}_{i-1}$ such that
$(c_{i-1}^{p_i^{s_i}}-\id)(\vec{y}_{i-1})\notin
(p_{i-1}\mathbb{Z}/(p_{i-1}^{r_{i-1}}))^{n_{i-1}}$. If $i=1$, then
the endomorphism of $(\mathbb{Z}/(p_s))^{n_s}$ induced by
$c_s^{p_1^{s_1}}-\id$ is nonzero and thus there exists $\vec{y}_s$
such that $(c_{s}^{p_1^{s_1}}-\id)(\vec{y}_s)\notin
(p_s\mathbb{Z}/(p_s^{r_s}))^{n_s}$. Hence we may assume that
$$(\vec{0},0,\dots,\vec{z}_k,\nu_k,\dots, \vec{0},0)\in I,$$
for some $\vec{z}_k\in (\mathbb{Z}/(p_k^{r_k}))^{n_k}\setminus
(p_k\mathbb{Z}/(p_k^{r_k}))^{n_k}$, and some $\nu_k\in
\mathbb{Z}/(p_k^{r_k})$. As above, we get that
$$(\vec{0},0,\dots,\vec{0},b_k(\vec{y}_k,\vec{z}_k),\dots,
\vec{0},0)\in I$$ (where $b_k(\vec{y}_k,\vec{z}_k)$ is in position
$2k$). Since $Q_k$ is non-degenerate and $\vec{z}_k\in
(\mathbb{Z}/(p_k^{r_k}))^{n_k}\setminus
(p_k\mathbb{Z}/(p_k^{r_k}))^{n_k}$, there exists $\vec{y}_k\in
(\mathbb{Z}/(p_k^{r_k}))^{n_k}\setminus
(p_k\mathbb{Z}/(p_k^{r_k}))^{n_k}$ such that
$b_k(\vec{y}_k,\vec{z}_k)$ is an invertible element in
$\mathbb{Z}/(p_k^{r_k})$. Hence $$w_k=(\vec{0},0,\dots,
\vec{0},1,\dots,\vec{0},0)\in I$$ (where $1$ is in position $2k$).
We get by the above argument that
$$w_1=(\vec{0},1,\vec{0},0,\dots,\vec{0},0),\dots, w_{s}=(\vec{0},0,\dots,\vec{0},0,\vec{0},1)\in I.$$
Now it is easy to see that the ideal generated by $\{ w_1,\dots
,w_{s}\}$ is equal to
$$\{(\vec{z}_1,\nu_1,\dots,\vec{z}_{s},\nu_{s})\mid \nu_i\in \mathbb{Z}/(p_i^{r_i}),\;\vec{z}_i\in V_i\},$$
where
$$V_i=\langle(f_i^{a_1}c_{i}^{a_2}-f_i^{a'_1}c_{i}^{a'_2})(\vec{y})\mid
a_k,a'_k\in\mathbb{Z},\; \vec{y}\in
(\mathbb{Z}/(p_i^{r_i}))^{n_i}\rangle_+,$$ for $1\leq i\leq s$.
Note
that  $f_i$ and $c_i$ are elements of relative prime order in the
group $\Aut((\mathbb{Z}/(p_i^{r_i}))^{n_i})$. Hence the subgroup
generated by $f_i$  and  $c_{i}$ is $\langle f_ic_{i}\rangle$, for
$1\leq i\leq s$.  Therefore
$$V_i=\im(f_ic_{i}-\id),$$ for $1\leq i\leq s$.
Note that
\begin{eqnarray*}
(f_ic_{i}-\id)^{p_i^{r_i}}&=&f_i^{p_i^{r_i}}c_{i}^{p_i^{r_i}}-\id+p_ih_i\\
&=&c_{i}^{p_i^{r_i}}-\id+p_ih_i\\
&=&(c_{i}-\id)^{p_i^{r_i}}+p_ih'_i,
\end{eqnarray*}
for some $h_i,h'_i\in \End((\mathbb{Z}/(p_{i}^{r_i}))^{n_i})$.
Clearly,  $p_ih'_i$ is in the Jacobson radical
of the ring $\End((\mathbb{Z}/(p_{i}^{r_i}))^{n_i})$. Hence
$f_ic_{i}-\id$ is an automorphism if and only if $c_{i}-\id$ is an
automorphism. Thus the result follows.
\end{proof}

\section{Realisations of constructions of simple
braces}\label{examples}

In this section concrete examples of finite
simple left braces constructed as in Theorem~\ref{sprimes} are
given. Using Proposition~\ref{morphism} and Theorem~\ref{graph_simple}, we then also construct more examples of simple left braces.

First, we need some computations for matrices over $\Z$, which we
will later reduce to $\Z/(p^r)$. Consider the companion matrix of
the polynomial $x^{n-1}+x^{n-2} + \cdots + x+1$
$$
D=\begin{pmatrix}
0& 0& \cdots & 0&-1\\
1& 0& \cdots & 0& -1\\
0& \ddots & \ddots & \vdots &\vdots\\
\vdots &\ddots & \ddots & 0& -1\\
0& \cdots &  0 & 1 &-1
\end{pmatrix}\in GL_{n-1}(\Z),
$$
that has  multiplicative order $n$. Let $E\in M_{n-1}(\Z)$ be the
matrix given by
$$E=\frac{1}{2}(\Id+D^tD+(D^2)^tD^2 +\cdots + (D^{n-1})^t D^{n-1}).$$
By a straightforward computation, one can check that
$$E= \begin{pmatrix}
n-1& -1 &\cdots &-1 \\
-1 & \ddots &\ddots & \vdots\\
\vdots & \ddots& \ddots & -1 \\
-1 & \cdots & -1 & n-1
\end{pmatrix}.
$$
Since $D^{n}=\Id$,  it
follows that $D^tED=E$. It is an easy exercise to check that $\det
(E) =n^{n-2}$.

Consider the quadratic form $Q$ over $\Z$ defined as
$$Q(\vec{x})=\displaystyle\sum_{1\leq i<j\leq n-1} x_ix_j$$
for  $\vec{x}=(x_1,x_2,\dots,x_{n-1})\in \Z^{n-1}$. One can check
that
\begin{eqnarray}\label{whatweknowoverZ}
Q(D(\vec{x}))&=&Q(\vec{x})+\binom{n-1}{2} x_{n-1}^2-(n-1)\sum_{i=1}^{n-2} x_ix_{n-1}.
\end{eqnarray}

Let $s$ be an integer greater than $1$. Let $p_1,p_2,\dots ,p_{s}$
be different prime numbers and let $r_1,r_2,\dots ,r_{s}$ be
positive integers. Assume that $p_1,\dots ,p_{s-1}$ are odd. If
$p_s=2$, the we also assume that $r_s=1$.  Consider the following
matrices:
$$
D_i\equiv D\pmod{p_i^{r_i}}, \quad E_i\equiv E\pmod{p_i^{r_i}}$$
with $D_i,E_i\in
GL_{p_{i+1}^{r_{i+1}}-1}(\mathbb{Z}/(p_i^{r_i}))$, for $1\leq i<s$,
and $D_s,E_s\in GL_{p_{1}^{r_{1}}-1}(\mathbb{Z}/(p_s^{r_s}))$.
Recall that $\det (E_i) =n_{i}^{n_i -2}$, where
$n_i=p_{i+1}^{r_{i+1}}$, for $1\leq i<s$, and $n_s=p_{1}^{r_{1}}$.
The order of $D_j$ is $n_j$, and  $D_j^tE_jD_j=E_j$. Hence,
$D_j$ is an element of order $n_j$ in the orthogonal group
determined by the non-singular quadratic form corresponding to $E_j$
on the free module $(\mathbb Z/(p_j)^{r_j})^{n_j-1}$, if $p_j$ is
odd. Moreover $D_j-\Id$ is invertible  (because $1$ is not an eigenvalue of $D_{j}$ modulo $(p_{j})$).

If $p_s=2$, then we consider the quadratic form $Q_s$ on the vector
space $(\mathbb Z/(2))^{p_1^{r_1}-1}$ defined by $Q_s(x_1,\dots,
x_{p_1^{r_1}-1})=\sum_{1\leq i<j\leq p_1^{r_1}-1}x_ix_j$. In this
case, let $v_s=(0,\dots ,0,\binom{p_1^{r_1}-1}{2})\in (\mathbb
Z/(2))^{p_1^{r_1}-1}$. By (\ref{whatweknowoverZ}) we have that
\begin{eqnarray*}
&&Q_s(D_s(\vec{x}))=Q_s(\vec{x})+\binom{p_1^{r_1}-1}{2}
x_{p_1^{r_1}-1}=Q_s(\vec{x})+v_s\vec{x}^t.
\end{eqnarray*}

Let $0\leq r_i' \leq r_i$. Consider in $ M_{p_i^{r'_i}(n_i-1)}
(\mathbb Z/(p_i^{r_i}))$ the block diagonal matrices with
$p_{i}^{r'_i}$ blocks of degree $n_i -1$:
$$C_i= \begin{pmatrix}
D_i & 0 & \cdots &   0\\
0 & D_i & \ddots &  \vdots  \\
\vdots &  \ddots& \ddots & 0\\
0& \cdots & 0 & D_i \end{pmatrix} \quad {\rm and } \quad
 B_i= \begin{pmatrix}
E_i & 0 & \cdots  &  0\\
0 & E_i & \ddots  & \vdots  \\
\vdots & \ddots&\ddots & 0\\
0& \cdots &  0 & E_i \end{pmatrix}.$$

Consider the following block permutation matrix
$$F_i= \begin{pmatrix}
0& 0 & \cdots & 0 &  J_i\\
J_i & 0 & \ddots &  & 0  \\
0 & \ddots & \ddots  & \ddots & \vdots  \\
\vdots &   \ddots& J_i& 0 & 0\\
0& \cdots &  0 & J_i & 0\end{pmatrix},$$ where $J_i\in M_{n_i-1}(
\mathbb Z/(p_i^{r_i}))$ is the identity matrix. Notice that $F_i^t =
F_i^{-1}$ and
\begin{eqnarray*}
\lefteqn{ F_i^tB_iF_i}\\
& =&\begin{pmatrix}
0& J_i & \cdots & 0 &  0\\
0 & 0 & \ddots &  & 0  \\
0 & \ddots & \ddots  & \ddots & \vdots  \\
\vdots &   \ddots& 0& 0 & J_i\\
J_i& \cdots &  0 & 0 & 0\end{pmatrix}
\begin{pmatrix}
E_i & 0 & \cdots & \cdots &  0\\
0 & E_i & \ddots &  & \vdots  \\
\vdots & \ddots & \ddots  & \ddots & \vdots  \\
\vdots &  &  \ddots& E_i & 0\\
0& \cdots & \cdots& 0 & E_i \end{pmatrix}
\begin{pmatrix}
0& 0 & \cdots & 0 &  J_i\\
J_i & 0 & \ddots &  & 0  \\
0 & \ddots & \ddots  & \ddots & \vdots  \\
\vdots &   \ddots& J_i& 0 & 0\\
0& \cdots &  0 & J_i & 0\end{pmatrix} \\
&=&B_i.
\end{eqnarray*}
 Therefore, $F_i$ is an
element of  order $p_i^{r'_i}$ in the orthogonal  group determined
by the non-singular quadratic form corresponding to $B_i$ on the
free module $(\mathbb Z/(p_i^{r_i}))^{p_i^{r'_i}(n_i-1)}$,  if $p_i$
is odd.

For $p_s=2$, if $r'_s=0$, then $F_s$ is the identity matrix. If
$r'_s=1$, then we consider the quadratic form $Q'_s$ on the vector
space $(\mathbb Z/(2))^{2(n_s-1)}$ defined by $Q'_s(x_1,\dots,
x_{2(n_s-1)})=Q_s(x_1,\dots, x_{n_s-1})+Q_s(x_{n_s},\dots,
x_{2(n_s-1)})$ and the element $v'_s=(v_s,v_s)\in (\mathbb
Z/(2))^{2(n_s-1)}$. In this case $F_s$ is an element of order $2$ in
the orthogonal group determined by the non-singular quadratic form
$Q'_s$. We also have that
$Q'_s(C_s(\vec{x}))=Q'_s(\vec{x})+v'_s\vec{x}^t$.

Moreover, we have $F_i^{-1} C_iF_i=C_i$, so that $C_iF_i=F_iC_i$,
and $C_i-\Id$ is invertible because $D_i-\Id$ is invertible.

By Theorem~\ref{sprimes}, we can construct a simple left brace with
additive group
$(\mathbb{Z}/(p_1^{r_1}))^{p_1^{r'_1}(p_2^{r_2}-1)+1}\times
\cdots\times
(\mathbb{Z}/(p_{s-1}^{r_{s-1}}))^{p_{s-1}^{r'_{s-1}}(p_s^{r_s}-1)+1}$
$\times (\mathbb
Z/(p_{s}^{r_{s}}))^{p_{s}^{r'_{s}}(p_1^{r_1}-1)+1}.$  First
 take the quadratic form $Q_i$ corresponding to the matrix $B_i$
if $p_i$ is odd, and $f_i$ corresponding to the matrices $F_i$.
Further, take $c_{i}$ corresponding to the matrix $C_i$ for $1\leq
i\leq s$.

Note that simplicity follows from Theorem~\ref{sprimes} because
$c_i-\id$ is  invertible for $i=1,2,\dots ,s$.

One can also construct concrete examples of simple left braces using
Theorem~\ref{graph_simple}.
For this we will need the following
lemma.

\begin{lemma}\label{qi}
With the notation of Section~\ref{simple}, consider the left brace
$H_1\bowtie\dots \bowtie H_s$ of Theorem~\ref{sprimes}. Then the map
$$\begin{array}{lccc}
\varphi_i\colon &(H_1\bowtie\dots \bowtie H_s,\cdot)&\longrightarrow
&\mathbb{Z}/(p_i^{r_i})\\
&(\vec{x}_1,\mu_1,\dots ,\vec{x}_s,\mu_s)&\mapsto
&q_i(\vec{x}_i,\mu_i)\end{array}$$ is a homomorphism of groups.
\end{lemma}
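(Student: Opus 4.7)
The plan is to compute the $i$-th component of the product $ab$ in the brace $H_1\bowtie\dots\bowtie H_s$ directly by means of formula (\ref{form_lambda}), and to exploit that in this construction only very few of the actions $\alpha^{(j,i)}$ are nontrivial.

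First I would note from the definition that $\alpha^{(j,i)}_{\,\cdot\,}=\id_{H_i}$ unless $(j,i)\in\{(l{+}1,l):1\le l\le s-1\}\cup\{(1,s)\}$. Plugging this into formula (\ref{form_lambda}) and using (\ref{cond1}), the $i$-th component of $\lambda_{(a_1,\ldots,a_s)}(b_1,\ldots,b_s)$ reduces, when $i<s$, to $\lambda^{(i)}_{a_i}\bigl(\alpha^{(i+1,i)}_{a''_{i+1}}(b_i)\bigr)$ for a certain $a''_{i+1}\in H_{i+1}$ (because for $i<s$ and $j<i$ the map $\alpha^{(j,i)}$ is trivial, hence the subscript of $\lambda^{(i)}$ collapses to $a_i$, and likewise all $\alpha^{(j,i)}$ with $j>i+1$ disappear); and when $i=s$, to $\alpha^{(1,s)}_{a_1}\circ\lambda^{(s)}_{(\alpha^{(1,s)}_{a_1})^{-1}(a_s)}(b_s)$.

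Next I would invoke Lemma \ref{auto}, which asserts that each $\alpha^{(j,i)}_{\,\cdot\,}$ is an automorphism of the left brace $H_i$. In the case $i=s$ this automorphism conjugates the lambda map, giving
$$\alpha^{(1,s)}_{a_1}\circ\lambda^{(s)}_{(\alpha^{(1,s)}_{a_1})^{-1}(a_s)}=\lambda^{(s)}_{a_s}\circ\alpha^{(1,s)}_{a_1}.$$
Combining both cases, there exists $\theta\in\aut(H_i,+,\cdot)$, namely $\theta=\alpha^{(i+1,i)}_{a''_{i+1}}$ when $i<s$ and $\theta=\alpha^{(1,s)}_{a_1}$ when $i=s$, such that the $i$-th component of $\lambda_a(b)$ equals $\lambda^{(i)}_{a_i}(\theta(b_i))$. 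Consequently
$$(ab)_i=a_i+\lambda^{(i)}_{a_i}(\theta(b_i))=a_i\cdot\theta(b_i),$$
the product on the right being computed in the left brace $H_i$.

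Finally, applying $q_i$, using that $q_i\colon(H_i,\cdot)\to\mathbb{Z}/(p_i^{r_i})$ is a group homomorphism (as noted just before Lemma \ref{q2primes}) and that Lemma \ref{q2primes} gives $q_i\circ\theta=q_i$, I would conclude
$$\varphi_i(ab)=q_i(a_i\cdot\theta(b_i))=q_i(a_i)+q_i(\theta(b_i))=q_i(a_i)+q_i(b_i)=\varphi_i(a)+\varphi_i(b).$$
The crux is the sharp simplification of formula (\ref{form_lambda}) afforded by the sparse pattern of nontrivial actions in this construction; once that reduction is carried out, the rest is immediate from Lemmas \ref{q2primes} and \ref{auto} together with the multiplicativity of $q_i$ on $H_i$.
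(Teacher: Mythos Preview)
Your proof is correct and follows essentially the same route as the paper's: both reduce the $i$-th component of the product via formula~(\ref{form_lambda}) using the sparse pattern of nontrivial $\alpha^{(j,i)}$, and then apply the multiplicativity of $q_i$ on $(H_i,\cdot)$ (equivalently, equation~(\ref{q})) together with Lemma~\ref{q2primes}. The paper simply records the simplified $i$-th component as $\lambda^{(i)}_{(\vec{x}_i,\mu_i)}\alpha^{(i+1,i)}_{(\vec{x}_{i+1},\mu_{i+1})}(\vec{y}_i,\mu'_i)$ (respectively $\lambda^{(s)}_{(\vec{x}_s,\mu_s)}\alpha^{(1,s)}_{(\vec{x}_1,\mu_1)}(\vec{y}_s,\mu'_s)$), while you make the commutation step for $i=s$ explicit via Lemma~\ref{auto}; otherwise the arguments are identical.
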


\begin{proof}
Let $(\vec{x}_1,\mu_1,\dots ,\vec{x}_s,\mu_s),
(\vec{y}_1,\mu'_1,\dots ,\vec{y}_s,\mu'_s)\in H_1\bowtie \dots
\bowtie H_s$. Using the formula (\ref{form_lambda}) of the lambda map, we get
\begin{eqnarray*}
\lefteqn{\varphi_i((\vec{x}_1,\mu_1,\dots ,\vec{x}_s,\mu_s)\cdot
(\vec{y}_1,\mu'_1,\dots ,\vec{y}_s,\mu'_s))}\\
&=&\varphi_i((\vec{x}_1,\mu_1,\dots
,\vec{x}_s,\mu_s)+\lambda_{(\vec{x}_1,\mu_1,\dots
,\vec{x}_s,\mu_s)}(\vec{y}_1,\mu'_1,\dots ,\vec{y}_s,\mu'_s))\\
&=&\varphi_i((\vec{x}_1,\mu_1,\dots
,\vec{x}_s,\mu_s)+(\lambda^{(1)}_{(\vec{x}_1,\mu_1)}\alpha^{(2,1)}_{(\vec{x}_2,\mu_2)}(\vec{y}_1,\mu'_1),\dots
,\\
&&\qquad
\lambda^{(s-1)}_{(\vec{x}_{s-1},\mu_{s-1})}\alpha^{(s,s-1)}_{(\vec{x}_s,\mu_s)}(\vec{y}_{s-1},\mu'_{s-1}),
\lambda^{(s)}_{(\vec{x}_s,\mu_s)}\alpha^{(1,s)}_{(\vec{x}_1,\mu_1)}
(\vec{y}_s,\mu'_s)))\\
&=&\left\{\begin{array}{ll}
q_i((\vec{x}_i,\mu_i)+(\lambda^{(i)}_{(\vec{x}_i,\mu_i)}\alpha^{(i+1,i)}_{(\vec{x}_{i+1},\mu_{i+1})}(\vec{y}_i,\mu'_i)))\quad \mbox{if }1\leq i<s\\
q_s((\vec{x}_s,\mu_s)+(\lambda^{(s)}_{(\vec{x}_s,\mu_s)}\alpha^{(1,s)}_{(\vec{x}_{1},\mu_{1})}(\vec{y}_s,\mu'_s)))\quad \mbox{if } i=s\\
\end{array}\right.\\
\end{eqnarray*}
Now we have that
\begin{eqnarray*}
\lefteqn{q_i((\vec{x}_i,\mu_i)+(\lambda^{(i)}_{(\vec{x}_i,\mu_i)}\alpha^{(i+1,i)}_{(\vec{x}_{i+1},\mu_{i+1})}(\vec{y}_i,\mu'_i)))}\\
&=&q_i(\vec{x}_i,\mu_i)+q_i(\alpha^{(i+1,i)}_{(\vec{x}_{i+1},\mu_{i+1})}(\vec{y}_i,\mu'_i))\quad\mbox{(by
(\ref{q}))}\\
&=&q_i(\vec{x}_i,\mu_i)+q_i(\vec{y}_i,\mu'_i)\quad\mbox{(by
Lemma~\ref{q2primes})}
\end{eqnarray*}
and
\begin{eqnarray*}
\lefteqn{q_s((\vec{x}_s,\mu_s)+(\lambda^{(s)}_{(\vec{x}_s,\mu_s)}\alpha^{(1,s)}_{(\vec{x}_{1},\mu_{1})}(\vec{y}_s,\mu'_s)))}\\
&=&q_s(\vec{x}_s,\mu_s)+q_s(\alpha^{(1,s)}_{(\vec{x}_{1},\mu_{1})}(\vec{y}_s,\mu'_s))\quad\mbox{(by
(\ref{q}))}\\
&=&q_s(\vec{x}_s,\mu_s)+q_s(\vec{y}_s,\mu'_s)\quad\mbox{(by
Lemma~\ref{q2primes})}
\end{eqnarray*}
Therefore, the result follows.
\end{proof}

\begin{lemma}\label{brauto}
With the notation of Section~\ref{simple}, assume that for some
$i\in\{ 1,\dots, s\}$ there exists a divisor $m$ of $n_i$ such that
$$
Q_i(\vec{x}_i)=\sum_{j=1}^{m}Q(\vec{x}_{i,j}),
$$
$$
f_i(\vec{x}_i)=(f(\vec{x}_{i,1}),\dots ,f(\vec{x}_{i,m})), \quad {\rm and }  \quad
c_{i}(\vec{x}_i)=(c(\vec{x}_{i,1}),\dots ,c(\vec{x}_{i,m})),
$$
where $\vec{x}_i=(\vec{x}_{i,1},\dots ,\vec{x}_{i,m})$,
$\vec{x}_{i,j}\in (\mathbb{Z}/(p_i^{r_i}))^{\frac{n_i}{m}}$, $Q$ is
a nonsingular quadratic form over
$(\mathbb{Z}/(p_i^{r_i}))^{\frac{n_1}{m}}$, $f$ is an element
in the orthogonal group determined by $Q$, if $i\neq s$, then $c$
also is an element in the orthogonal group determined by $Q$,
and, if $i=s$, then $c$ is an automorphism of
$(\mathbb{Z}/(p_s^{r_s}))^{\frac{n_s}{m}}$, $v_s=(v,\dots ,v)$, for
some $v\in (\mathbb{Z}/(p_s^{r_s}))^{\frac{n_s}{m}}$, and
$Q(c(\vec{x}))=Q(\vec{x})+v\vec{x}^t$. Let $\sigma\in \sym_{m}$.
Then the map
$$\psi_{\sigma}\colon H_1\bowtie\dots \bowtie H_s\longrightarrow
H_1\bowtie \dots \bowtie  H_s,$$ defined by
\begin{eqnarray*}
\lefteqn{\psi_{\sigma}(\vec{x}_1,\mu_1,\dots
,\vec{x}_s,\mu_s)}\\
 &=&(\vec{x}_1,\mu_1,\dots
,\vec{x}_{i-1},\mu_{i-1},\vec{x}_{i,\sigma(1)},\dots,
\vec{x}_{i,\sigma(m)},\mu_i,\vec{x}_{i+1},\mu_{i+1},\dots
,\vec{x}_s,\mu_s),
\end{eqnarray*}
is an
automorphism of the left brace $H_1\bowtie\dots \bowtie  H_s$ of
Theorem~\ref{sprimes}.
\end{lemma}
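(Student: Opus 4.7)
My plan is to prove that $\psi_\sigma$ is an automorphism of $H_{1}\bowtie\dots\bowtie H_{s}$ by checking separately that it preserves the addition and the lambda map (and hence the multiplication, since $ab=a+\lambda_{a}(b)$). The addition on an iterated matched product is componentwise, and $\psi_\sigma$ merely permutes the $m$ identical blocks inside the $i$-th slot, so $\psi_\sigma\in\Aut(H_{1}\bowtie\dots\bowtie H_{s},+)$ is immediate. Multiplicativity then reduces to verifying the identity
\[
\psi_\sigma\bigl(\lambda_{\vec{a}}(\vec{b})\bigr) = \lambda_{\psi_\sigma(\vec{a})}\bigl(\psi_\sigma(\vec{b})\bigr)
\]
for all $\vec{a},\vec{b}\in H_{1}\bowtie\dots\bowtie H_{s}$.

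Next I would record the invariance statements that follow from the block hypotheses. Since $Q_{i}$ is a sum of copies of $Q$ over the $m$ blocks, $Q_{i}\circ\psi_\sigma = Q_{i}$ and hence $q_{i}\circ\psi_\sigma = q_{i}$; the bilinear form $b_{i}$ splits the same way, so $b_{i}(\psi_\sigma(\vec{x}),\psi_\sigma(\vec{y})) = b_{i}(\vec{x},\vec{y})$; the block-diagonal operators $f_{i}$ and $c_{i}$ commute with $\psi_\sigma$; and, in the case $i=s$, the assumption $v_{s}=(v,\dots,v)$ together with $c_{s}\psi_\sigma=\psi_\sigma c_{s}$ yields
\[
v_{s}\bigl((\id+c_{s}+\dots+c_{s}^{N-1})(\psi_\sigma(\vec{x}))\bigr)^{t} = v_{s}\bigl((\id+c_{s}+\dots+c_{s}^{N-1})(\vec{x})\bigr)^{t}
\]
for every $N\geq 0$. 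For $j\neq i$ the data $Q_{j},b_{j},f_{j},c_{j}$ are untouched by $\psi_\sigma$, and the $i$-th block of $\vec{a}$ enters the lambda-formula for components $k\neq i$ only through the value $q_{i}(\vec{a}_{i},\mu_{i})$, which is $\psi_\sigma$-invariant.

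With these invariances in hand I would apply formula \eqref{form_lambda} coordinate by coordinate. In the setting of Section~\ref{simple} the only non-trivial $\alpha^{(j,k)}$ are the adjacent $\alpha^{(k+1,k)}$ for $1\leq k<s$ and the wrap-around $\alpha^{(1,s)}$, which drastically collapses the general formula. For $k\neq i$, the $k$-th component of $\lambda_{\vec{a}}(\vec{b})$ depends on $\vec{a}_{k},\vec{b}_{k}$ (neither of which is touched by $\psi_\sigma$) and on the exponents $q_{j}(\vec{a}_{j},\mu_{j})$ (all $\psi_\sigma$-invariant), so the two sides agree and are $\psi_\sigma$-fixed in that slot. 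For $k=i$, pulling $\psi_\sigma$ past the relevant $f_{i}^{q_{i}}$ and $c_{i}$-powers via the commutation relations and invoking the invariance of $b_{i}$ (and of the $v_{s}$-contribution in the case $i=s$) yields precisely $\psi_\sigma$ applied to the original $i$-th component.

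The main obstacle I expect is not any individual algebraic step but careful bookkeeping: tracking exactly which intermediate arguments in \eqref{form_lambda} depend on which $q_{j}$ and on which blocks, and, in the case $i=s$ (where $p_{s}$ may equal $2$ and the $v_{s}$-term cannot be discarded), handling the inhomogeneous contribution in $\alpha^{(1,s)}$ without mistakes. Once the invariance inputs are established, each remaining case reduces to a routine substitution.
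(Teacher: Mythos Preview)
Your proposal is correct and follows essentially the same approach as the paper: reduce to the equivariance identity $\psi_\sigma\circ\lambda_{\vec a}=\lambda_{\psi_\sigma(\vec a)}\circ\psi_\sigma$, observe that the only nontrivial lambda-component in slot $k$ is $\lambda^{(k)}\circ\alpha^{(k+1,k)}$ (respectively $\lambda^{(s)}\circ\alpha^{(1,s)}$), and then verify the identity using the block decompositions $q_i=\mu_i-\sum_j Q(\vec x_{i,j})$, $b_i=\sum_j b(\cdot,\cdot)$, the commutation of $f_i,c_i$ with the block permutation, and the $v_s$-invariance when $i=s$. The paper's proof is in fact briefer than yours (it records the form of the components and then leaves the verification to the reader), so your write-up with the explicit invariance statements is, if anything, more complete.
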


\begin{proof}
Let $(\vec{x}_1,\mu_1,\dots ,\vec{x}_s,\mu_s),
(\vec{y}_1,\mu'_1,\dots ,\vec{y}_s,\mu'_s)\in H_1\bowtie \dots
\bowtie H_s$. Clearly, $\psi_{\sigma}$ is an automorphism of the
additive group of the left brace $H_1\bowtie \dots \bowtie H_s$.
Thus, to prove the result, it is enough to show that
\begin{eqnarray}\label{psilambda}
\lefteqn{\psi_{\sigma}(\lambda_{(\vec{x}_1,\mu_1,\dots
,\vec{x}_s,\mu_s)}(\vec{y}_1,\mu'_1,\dots
,\vec{y}_s,\mu'_s))}\notag\\
&=&\lambda_{\psi_{\sigma}(\vec{x}_1,\mu_1,\dots
,\vec{x}_s,\mu_s)}\psi_{\sigma}(\vec{y}_1,\mu'_1,\dots
,\vec{y}_s,\mu'_s)
\end{eqnarray}
Note that, if $1\leq i<s$, then the component $i$ of
$\lambda_{(\vec{x}_1,\mu_1,\dots
,\vec{x}_s,\mu_s)}(\vec{y}_1,\mu'_1,\dots ,\vec{y}_s,\mu'_s)$ is
$\lambda^{(i)}_{(\vec{x}_i,\mu_i)}\alpha^{(i+1,i)}_{\vec{x}_{i+1},\mu_{i+1}}(\vec{y}_i,\mu'_i)$.

The component $s$ of $\lambda_{(\vec{x}_1,\mu_1,\dots
,\vec{x}_s,\mu_s)}(\vec{y}_1,\mu'_1,\dots ,\vec{y}_s,\mu'_s)$ is
$\lambda^{(s)}_{(\vec{x}_s,\mu_s)}\alpha^{(1,s)}_{\vec{x}_{1},\mu_{1}}(\vec{y}_s,\mu'_s)$.

Since $q_i(\vec{x}_i,\mu_i)=\mu_i-\sum_{j=1}^mQ(\vec{x}_{i,j})$, and
$b_i(\vec{x}_i,\vec{z}_i)=\sum_{j=1}^mb(\vec{x}_{i,j},\vec{z}_{i,j})$,
where
$b(\vec{x}_{i,j},\vec{z}_{i,j})=Q(\vec{x}_{i,j}+\vec{z}_{i,j})-Q(\vec{x}_{i,j})-Q(\vec{z}_{i,j})$,
the reader  easily can  check (\ref{psilambda}) using the form of
$f_i$, $c_{i}$ (and $v_s$ in the case where $i=s$).
\end{proof}

Because of Proposition~\ref{morphism} and Theorem~\ref{graph_simple}, we are now in a position to construct more concrete examples of  simple left
braces that are iterated matched products of left ideals.

\begin{example}\label{ex}{\rm
Let $p_1,p_2,p_3,p_4$ be different prime numbers. For simplicity,
assume that  all are odd. We can construct as above two
simple left braces $H_1\bowtie H_2$ and $H_3\bowtie H_4$, where
$$H_1=H(p_1,p_4(p_2-1),Q_1,\id),\; H_2=H(p_2,p_1-1,Q_2,\id),$$
$$H_3=H(p_3,p_2(p_4-1),Q_3,\id),\; H_4=H(p_4,p_3-1,Q_4,\id),$$ for some nonsingular quadratic forms $Q_j$ where
\begin{eqnarray*}
&&Q_1(x_1,\dots,x_{p_4(p_2-1)})=\sum_{j=0}^{p_4-1}Q'_1(x_{1+(p_2-1)j},\dots ,x_{p_2-1+(p_2-1)j}),\\
&&Q_3(y_1,\dots
,y_{p_2(p_4-1)})=\sum_{k=0}^{p_2-1}Q'_3(y_{1+(p_4-1)k},\dots
,y_{p_4-1+(p_4-1)k}),
\end{eqnarray*}
$Q'_1$ is a nonsingular quadratic form over
$(\mathbb{Z}/(p_1))^{p_2-1}$, $Q_2$ is a nonsingular quadratic form
over $(\mathbb{Z}/(p_2))^{p_1-1}$, $Q'_3$ is a nonsingular quadratic
form over $(\mathbb{Z}/(p_3))^{p_4-1}$ and $Q_4$ is a nonsingular
quadratic form over $(\mathbb{Z}/(p_4))^{p_3-1}$. Let $c'_{1}$ be an
element of order $p_2$ in the orthogonal group determined by $Q'_1$,
let $d_{1}$ be an element of order $p_1$  in the orthogonal group
determined by $Q_2$, let $c'_{2}$ be an element of order $p_4$  in
the orthogonal group determined by $Q'_3$, let $d_{2}$ be an element
of order $p_3$  in the orthogonal group determined by $Q_4$. Assume
that the endomorphisms $c'_1-\id$, $d_1-\id$, $c'_2-\id$ and
$d_2-\id$ are invertible.  Note that the maps $c_1\in
\Aut((\mathbb{Z}/(p_1))^{p_4(p_2-1)})$ and $c_2\in
\Aut((\mathbb{Z}/(p_3))^{p_2(p_4-1)})$ defined by
$$c_1(\vec{x}_1,\dots ,\vec{x}_{p_4})=(c'_1(\vec{x}_1),\dots ,c'_1(\vec{x}_{p_4}))$$
and
$$c_2(\vec{y}_1,\dots ,\vec{y}_{p_2})=(c'_2(\vec{y}_1),\dots ,c'_2(\vec{y}_{p_4})),$$
for $\vec{x}_j\in (\mathbb{Z}/(p_1))^{p_2-1}$ and $\vec{y}_k\in
(\mathbb{Z}/(p_3))^{p_4-1}$, are elements  in the orthogonal
groups determined by $Q_1$ and $Q_3$ respectively. The actions of
the matched pairs $(H_1,H_2,\alpha^{(2,1)},\alpha^{(1,2)})$ and
$(H_3,H_4,\alpha^{(4,3)},\alpha^{(3,4)})$ are defined by
$\alpha^{(j,i)}(\vec{z},\mu')=\alpha^{(j,i)}_{(\vec{z},\mu')}$ and
\begin{eqnarray*}\alpha^{(2,1)}_{(\vec{z},\mu')}(\vec{x},\mu)&=&(c_1^{q_2(\vec{z},\mu')}(\vec{x}),\mu),\\
\alpha^{(1,2)}_{(\vec{x},\mu)}(\vec{z},\mu')&=&(d_1^{q_1(\vec{x},\mu)}(\vec{z}),\mu'),\\
\alpha^{(4,3)}_{(\vec{u},\nu')}(\vec{y},\nu)&=&(c_2^{q_4(\vec{u},\nu')}(\vec{y}),\nu),\\
\alpha^{(3,4)}_{(\vec{y},\nu)}(\vec{u},\nu')&=&(d_2^{q_3(\vec{y},\nu)}(\vec{u}),\nu'),
\end{eqnarray*}
Let $\sigma_1$ and $\sigma_2$ be the cyclic permutations
$\sigma_1=(1,2,\dots, p_2)$ and $\sigma_2=(1,2,\dots, p_4)$. By
Lemma~\ref{brauto}, the maps $\psi_{\sigma_1}\colon H_3\bowtie
H_4\longrightarrow H_3\bowtie H_4$, defined by
$$\psi_{\sigma_1}(\vec{y}_1,\dots,\vec{y}_{p_2},\nu,\vec{u},\nu')=(\vec{y}_{\sigma_1(1)},\dots,\vec{y}_{\sigma_1(p_2)},\nu,\vec{u},\nu'),$$
and $\psi_{\sigma_2}\colon H_1\bowtie H_2\longrightarrow H_1\bowtie
H_2$, defined by
$$\psi_{\sigma_2}(\vec{x}_1,\dots,\vec{x}_{p_4},\mu,\vec{z},\mu')=(\vec{x}_{\sigma_2(1)},\dots,\vec{x}_{\sigma_2(p_4)},\mu,\vec{z},\mu'),$$
are automorphisms of left braces. We define $\alpha\colon
(H_3\bowtie H_4, \cdot )\longrightarrow \Aut(H_1\bowtie
H_2,+,\cdot)$ and $\beta\colon (H_1\bowtie H_2,\cdot
)\longrightarrow \Aut(H_3\bowtie H_4,+,\cdot)$ by
$$\alpha(\vec{y}_1,\nu,\vec{u},\nu')=\psi_{\sigma_2}^{q_4(\vec{u},\nu')} \quad
{\rm and } \quad
\beta(\vec{x}_1,\mu,\vec{z},\mu')=\psi_{\sigma_1}^{q_2(\vec{z},\mu')}.$$
By Lemma~\ref{qi}, $\alpha$ and $\beta$ are homomorphisms of groups.
Denote $\alpha(\vec{y}_1,\nu,\vec{u},\nu')$ by
$\alpha_{(\vec{y}_1,\nu,\vec{u},\nu')}$ and denote
$\beta(\vec{x}_1,\mu,\vec{z},\mu')$ by
$\beta_{(\vec{x}_1,\mu,\vec{z},\mu')}$. Note that
$$
\alpha_{\beta_{(\vec{x},\mu,\vec{z},\mu')}(\vec{y},\nu,\vec{u},\nu')}\; =\; \alpha_{\psi_{\sigma_1}^{q_2(\vec{z},\mu')}(\vec{y},\nu,\vec{u},\nu')}
\; =\; \psi_{\sigma_2}^{q_4(\vec{u},\nu')}
\; = \; \alpha_{(\vec{y},\nu,\vec{u},\nu')}
$$
 and
$$\beta_{\alpha_{(\vec{y},\nu,\vec{u},\nu')}(\vec{x},\mu,\vec{z},\mu')}\; =\; \beta_{\psi_{\sigma_2}^{q_4(\vec{u},\nu')}(\vec{x},\mu,\vec{z},\mu')}
\; =\; \psi_{\sigma_1}^{q_2(\vec{z},\mu')}
\; = \; \beta_{(\vec{x},\mu,\vec{z},\mu')}.
$$
Hence, by
Proposition~\ref{morphism}, $(H_1\bowtie H_2, H_3\bowtie H_4,\alpha,
\beta)$ is a matched pair of left braces and the matched product
$(H_1\bowtie H_2)\bowtie (H_3\bowtie H_4)$ is a simple left brace.
}\end{example}

\begin{remark}{\rm
Clearly Example~\ref{ex} can be generalized to matched products of
arbitrary two simple braces of coprime orders constructed as in
Theorem~\ref{sprimes};  namely by
 replacing $H_1 \bowtie H_2$ and $H_3 \bowtie H_4$ by any braces as in Theorem~\ref{sprimes}.
 Even more, we can construct simple braces as iterated matched product of left ideals $B_1\bowtie \dots\bowtie B_n$, where every $B_i$ is a simple left brace as in Theorem~\ref{sprimes}.}
\end{remark}

\section{Comments and questions}\label{comments}

In view  of Remark~\ref{Sylow_remark} and the comment following Theorem~\ref{graph_simple}, the following
 seems to be a crucial step in the general program of describing all finite simple left braces.

\begin{problem}  Describe the structure of all left braces of order $p^n$, for a prime $p$.
And describe the group $\Aut (B,+,\cdot)$ of automorphisms for all
such left braces.
\end{problem}

Given two distinct primes $p$ and $q$, in the previous sections, we
have constructed many simple left braces of order
$p^{\alpha}q^{\beta}$ with some  natural restrictions on the
positive integers $\alpha$ and $\beta$.
As mentioned in the introduction,  these natural restrictions come from a recent result of
Smoktunowicz, that yields a necessary condition for a left brace of order
$p^{\alpha} q^{\beta}$ to be simple. Namely,  $q\mid (p^i-1)$ and
$p\mid (q^j-1)$, for some $1\leq i\leq \alpha$  and $1\leq j\leq
\beta$. Hence, the following seems
to be an interesting question.

\begin{problem}
Determine for which prime numbers $p$, $q$ and positive integers
$\alpha, \beta$, there exists a simple left brace of cardinality
$p^\alpha q^\beta$.
\end{problem}

An easy observation shows that not all such orders can occur.

\begin{remark}\label{answerAgata}
{\rm Let $G$ be a group of order $p^nq$, where $n$ is the
multiplicative order of $p$ in $(\Z/(q))^\star$, and $p$ and $q$ are
distinct prime numbers.   Because of the Sylow theorems, it is easy
to see  that either a Sylow $p$-subgroup or a Sylow $q$-subgroup of
$G$ is a normal subgroup. By Proposition 6.1 in \cite{B3}, every
normal Sylow subgroup of the multiplicative group of a left brace
$B$ is an ideal of $B$. Therefore,  $G$ does not admit a structure
of a simple left brace.
For instance, any group of order $2^3\cdot 7$ has a normal
Sylow $2$-subgroup or a normal Sylow $7$-subgroup, so there is no simple left brace of order $2^3\cdot 7$.}
\end{remark}

\vspace{30pt}
 \noindent \begin{tabular}{llllllll}
 D. Bachiller && F. Ced\'o  \\
 Departament de Matem\`atiques &&  Departament de Matem\`atiques \\
 Universitat Aut\`onoma de Barcelona &&  Universitat Aut\`onoma de Barcelona  \\
08193 Bellaterra (Barcelona), Spain    && 08193 Bellaterra (Barcelona), Spain \\
 dbachiller@mat.uab.cat &&  cedo@mat.uab.cat\\
   &&   \\
E. Jespers && J. Okni\'{n}ski  \\ Department of Mathematics &&
Institute of Mathematics
\\  Vrije Universiteit Brussel && Warsaw University \\
Pleinlaan 2, 1050 Brussel, Belgium && Banacha 2, 02-097 Warsaw, Poland\\
efjesper@vub.ac.be&& okninski@mimuw.edu.pl
\end{tabular}

\end{document}